\newtheorem{thm}{Theorem}[section]
\newtheorem{prp}[thm]{Proposition}
\newtheorem{cor}[thm]{Corollary}
\newtheorem{dfn}[thm]{Definition}
\newtheorem{baseexample}[thm]{Example} 
\newtheorem{baseremark}[thm]{Remark} 
\newenvironment{example}
{\begin{baseexample}\rm}{\end{baseexample}}
\newenvironment{remark}
{\begin{baseremark}\rm}{\end{baseremark}}
\newcommand{\rem}[1]{}
\newcommand{\N}{\mathbb{N}}
\newcommand{\Q}{\mathbb{Q}}
\newcommand{\R}{\mathbb{R}}
\newcommand{\Z}{\mathbb{Z}}
\newcommand{\frakCapital}{
\newcommand{\frakA}{{\mathfrak{A}}}
\newcommand{\frakB}{{\mathfrak{B}}}
\newcommand{\frakC}{{\mathfrak{C}}}
\newcommand{\frakD}{{\mathfrak{D}}}
\newcommand{\frakE}{{\mathfrak{E}}}
\newcommand{\frakF}{{\mathfrak{F}}}
\newcommand{\frakG}{{\mathfrak{G}}}
\newcommand{\frakH}{{\mathfrak{H}}}
\newcommand{\frakI}{{\mathfrak{I}}}
\newcommand{\frakJ}{{\mathfrak{J}}}
\newcommand{\frakK}{{\mathfrak{K}}}
\newcommand{\frakL}{{\mathfrak{L}}}
\newcommand{\frakM}{{\mathfrak{M}}}
\newcommand{\frakN}{{\mathfrak{N}}}
\newcommand{\frakO}{{\mathfrak{O}}}
\newcommand{\frakP}{{\mathfrak{P}}}
\newcommand{\frakQ}{{\mathfrak{Q}}}
\newcommand{\frakR}{{\mathfrak{R}}}
\newcommand{\frakS}{{\mathfrak{S}}}
\newcommand{\frakT}{{\mathfrak{T}}}
\newcommand{\frakU}{{\mathfrak{U}}}
\newcommand{\frakV}{{\mathfrak{V}}}
\newcommand{\frakW}{{\mathfrak{W}}}
\newcommand{\frakX}{{\mathfrak{X}}}
\newcommand{\frakY}{{\mathfrak{Y}}}
\newcommand{\frakZ}{{\mathfrak{Z}}}
}
\newcommand{\calCapital}{
\newcommand{\calA}{{\mathcal{A}}}
\newcommand{\calB}{{\mathcal{B}}}
\newcommand{\calC}{{\mathcal{C}}}
\newcommand{\calD}{{\mathcal{D}}}
\newcommand{\calE}{{\mathcal{E}}}
\newcommand{\calF}{{\mathcal{F}}}
\newcommand{\calG}{{\mathcal{G}}}
\newcommand{\calH}{{\mathcal{H}}}
\newcommand{\calI}{{\mathcal{I}}}
\newcommand{\calJ}{{\mathcal{J}}}
\newcommand{\calK}{{\mathcal{K}}}
\newcommand{\calL}{{\mathcal{L}}}
\newcommand{\calM}{{\mathcal{M}}}
\newcommand{\calN}{{\mathcal{N}}}
\newcommand{\calO}{{\mathcal{O}}}
\newcommand{\calP}{{\mathcal{P}}}
\newcommand{\calQ}{{\mathcal{Q}}}
\newcommand{\calR}{{\mathcal{R}}}
\newcommand{\calS}{{\mathcal{S}}}
\newcommand{\calT}{{\mathcal{T}}}
\newcommand{\calU}{{\mathcal{U}}}
\newcommand{\calV}{{\mathcal{V}}}
\newcommand{\calW}{{\mathcal{W}}}
\newcommand{\calX}{{\mathcal{X}}}
\newcommand{\calY}{{\mathcal{Y}}}
\newcommand{\calZ}{{\mathcal{Z}}}
}
\newcommand{\bbCapital}{
\newcommand{\bbA}{{\mathbb{A}}}
\newcommand{\bbB}{{\mathbb{B}}}
\newcommand{\bbC}{{\mathbb{C}}}
\newcommand{\bbD}{{\mathbb{D}}}
\newcommand{\bbE}{{\mathbb{E}}}
\newcommand{\bbF}{{\mathbb{F}}}
\newcommand{\bbG}{{\mathbb{G}}}
\newcommand{\bbH}{{\mathbb{H}}}
\newcommand{\bbI}{{\mathbb{I}}}
\newcommand{\bbJ}{{\mathbb{J}}}
\newcommand{\bbK}{{\mathbb{K}}}
\newcommand{\bbL}{{\mathbb{L}}}
\newcommand{\bbM}{{\mathbb{M}}}
\newcommand{\bbN}{{\mathbb{N}}}
\newcommand{\bbO}{{\mathbb{O}}}
\newcommand{\bbP}{{\mathbb{P}}}
\newcommand{\bbQ}{{\mathbb{Q}}}
\newcommand{\bbR}{{\mathbb{R}}}
\newcommand{\bbS}{{\mathbb{S}}}
\newcommand{\bbT}{{\mathbb{T}}}
\newcommand{\bbU}{{\mathbb{U}}}
\newcommand{\bbV}{{\mathbb{V}}}
\newcommand{\bbW}{{\mathbb{W}}}
\newcommand{\bbX}{{\mathbb{X}}}
\newcommand{\bbY}{{\mathbb{Y}}}
\newcommand{\bbZ}{{\mathbb{Z}}}
}
\newcommand{\catCapital}{
\newcommand{\catA}{{\mathscr{A}}}
\newcommand{\catB}{{\mathscr{B}}}
\newcommand{\catC}{{\mathscr{C}}}
\newcommand{\catD}{{\mathscr{D}}}
\newcommand{\catE}{{\mathscr{E}}}
\newcommand{\catF}{{\mathscr{F}}}
\newcommand{\catG}{{\mathscr{G}}}
\newcommand{\catH}{{\mathscr{H}}}
\newcommand{\catI}{{\mathscr{I}}}
\newcommand{\catJ}{{\mathscr{J}}}
\newcommand{\catK}{{\mathscr{K}}}
\newcommand{\catL}{{\mathscr{L}}}
\newcommand{\catM}{{\mathscr{M}}}
\newcommand{\catN}{{\mathscr{N}}}
\newcommand{\catO}{{\mathscr{O}}}
\newcommand{\catP}{{\mathscr{P}}}
\newcommand{\catQ}{{\mathscr{Q}}}
\newcommand{\catR}{{\mathscr{R}}}
\newcommand{\catS}{{\mathscr{S}}}
\newcommand{\catT}{{\mathscr{T}}}
\newcommand{\catU}{{\mathscr{U}}}
\newcommand{\catV}{{\mathscr{V}}}
\newcommand{\catW}{{\mathscr{W}}}
\newcommand{\catX}{{\mathscr{X}}}
\newcommand{\catY}{{\mathscr{Y}}}
\newcommand{\catZ}{{\mathscr{Z}}}
}
\newcommand{\vphi}{\varphi}
\newcommand{\suchthat}{\,:\,}
\newcommand{\where}{\,|\,}
\newcommand{\quo}[1]{\overline{#1}}
\newcommand{\SMatII}[4]{\left[\begin{array}{cc} {#1} & {#2} \\ {#3} &
{#4} \end{array}\right]}
\newcommand{\smallSMatII}[4]{\left[\begin{smallmatrix} {#1} & {#2} \\ {#3} &
{#4} \end{smallmatrix}\right]}
\newcommand{\Circs}[1]{\left( #1 \right)}
\DeclareMathOperator{\Cent}{Cent}
 \DeclareMathOperator{\End}{End}
\DeclareMathOperator{\Hom}{Hom} %
\DeclareMathOperator{\id}{id} %
\DeclareMathOperator{\im}{im} %
\DeclareMathOperator{\Jac}{Jac} %
\newcommand{\op}{\mathrm{op}} %
\DeclareMathOperator{\ord}{ord} %
\newcommand{\nMat}[2]{\mathrm{M}_{#2}(#1)}
\newcommand{\invlim}{\underleftarrow{\lim}\,}
\newcommand{\units}[1]{{#1^\times}}
\newcommand{\rMod}[1]{{\mathrm{Mod}\textrm{-}{#1}}}
\newcommand{\lMod}[1]{{#1}\textrm{-}{\mathrm{Mod}}}
\newcommand{\rproj}[1]{{\mathrm{proj}}\textrm{-}{#1}}
\newcommand{\lproj}[1]{{#1}\textrm{-}{\mathrm{proj}}}
\DeclareMathOperator{\Rep}{Rep}
\newcommand{\rHMod}[1]{{\mathrm{HMod}\textrm{-}{#1}}}
\newcommand{\FP}[1]{{\mathrm{fp}\textrm{-}{#1}}}
\newcommand{\HFP}[1]{{\mathrm{hfp}\textrm{-}{#1}}}
\newcommand{\lHFP}[1]{{{#1}\textrm{-}\mathrm{hfp}}}
\title{Categorical Realizations of Quivers}
\author{Uriya A.\ First$^*$}
\date{\today}
\address{$^*$Einstein Institute of Mathematics, Hebrew University of Jerusalem}
\email{uriya.first@gmail.com}
\thanks{This research was supported by an ERC grant \#226135 and by
the Lady Davis Fellowship Trust.}
\keywords{quiver, additive category, pseudo-abelian category, Krull-Schmidt Theorem, Fitting's Lemma, Fitting's Property,
semi-invariant subring, semi-centralizer subring, semiperfect ring}
\subjclass[2010]{16G20, 16L30, 18E05}
\begin{document}

\maketitle

\begin{abstract}
    We introduce and study categorical realizations of quivers.
    This construction generalizes \emph{comma categories}
    and includes representations of quivers on categories, twisted representations of quivers
    (in the sense of \cite{GothKing05}) and bilinear pairings as special cases.
    We prove a Krull-Schmidt Theorem in this general context, which results in a Krull-Schmidt
    Theorem for the special cases just mentioned. We also show that cancelation holds
    under milder assumptions. Using similar ideas we  prove
    a version of Fitting's Lemma for natural transformations between functors.
\rem{
    Let $\catC$ be an additive category and let $Q$ be a quiver.
    We show that under certain assumptions,
    every representation of $Q$ on $\catC$ has a \emph{Krull-Schmidt decomposition}.
    For example, this holds when $\catC$ is a category of finitely presented right $R$-modules over
    a semilocal ring $R$  which is complete in the $\Jac(R)$-adic topology
    and such that $\Jac(R)$ is finitely generated as a right ideal.
    We also obtain a version of Fitting's Lemma for natural transformations
    between functors $F:\catU\to \catC$, where $\catU$ is an arbitrary category.
}
\end{abstract}

\section{Overview}

    Throughout, all rings are assumed to have a unity and ring
    homomorphisms are required to preserve it.
    Subrings are assumed to have the same unity as the ring containing them.
    The Jacobson radical of a ring $R$ is denoted by $\Jac(R)$.

    \medskip

    In this paper, we introduce categorical realizations of quivers and representations
    of quivers on these realizations. This construction generalizes \emph{comma categories} (which are a categorical
    realizations of the quiver $\bullet\!\to\!\bullet\,$; see \cite[3K]{AdHerStr90AbstractAndConcreteCats}).
    Among its special cases are representations of quivers on additive categories (vector spaces
    in particular), twisted representations of quivers in the sense of \cite{GothKing05},  and  bilinear pairings.

    Using \emph{semi-centralizer subrings} (also called
    \emph{semi-invariant subrings}), introduced in \cite{Fi12C}, we establish a \emph{Krull-Schmidt
    Theorem} for representations of quivers on their realizations, thus yielding Krull-Schmidt Theorems
    for  the special cases mentioned above.
    We note that the categories in question
    are usually not abelian, so no ``finite length'' considerations (see Example~\ref{REP:EX:abelian-cats}) can be applied.
    Using similar ideas, we also prove a version of Fitting's Lemma for natural transformations
    between functors.
    Finally, different techniques are used to prove
    cancelation (from direct sums) of representations of quivers on their realizations under milder assumptions.
    Again, this yields cancelation theorems for the previous special cases.

    This paper can be viewed as a continuation of \cite{Fi12C} presenting further applications
    of semi-invariant subrings.

    \medskip

    Section~\ref{section:Krull-Schmidt} recalls the Krull-Schmidt Theorem for pseudo-abelian categories.
    Section~\ref{section:realizations} introduces categorical realizations of quivers
    and section~\ref{section:LT} defines linearly topologized categories.
    Section~\ref{section:pi-regular} recalls Fitting's Property (called \emph{quasi-$\pi_\infty$-regular} in \cite{Fi12C})
    and semi-centralizer subrings.
    The main result of the paper and its applications are presented in section~\ref{section:main}.
    A version of Fitting's Lemma for natural transformations is given
    in section \ref{section:Fitting}. Finally, section~\ref{section:cancelation} discusses cancelation from direct sums.

\rem{

    Representations of quivers over fields are widely studied (e.g.\ see \cite{AssemIbSim06Reps}). Such
    representation can be understood as representations on the category
    of finite dimensional vector spaces.
    However, in recent years, representations of quivers and  \emph{twisted} quivers
    on other additive categories were considered, in particular, representations over vector bundles (quiver bundles) and
    sheaves (quivers sheaves); see \cite{GothKing05} and related papers, for instance.

    In this
    paper, we establish a \emph{Krull-Schmidt Theorem} for representations (and twisted representations)
    of a finite quiver over an additive category $\catC$, provided $\catC$ satisfies certain assumptions.
    Krull-Schmidt Theorems of this kind are not hard to obtain when $\catC$ is abelian and consists of objects
    of finite length (see Example~\ref{REP:EX:abelian-cats} for details). However, our result does
    not assume
    any of these conditions and can thus be applied to non-abelian categories such as finitely presented modules or
    vector bundles, provided the endomorphism rings of objects in $\catC$ satisfy certain finiteness conditions,
    e.g.\ being semiprimary.
    For example, these conditions are met when $\catC$ is the category of finitely
    presented right $R$-modules, with $R$ a Henselian
    discrete valuation ring, or  a semilocal ring which is complete  in the $\Jac(R)$-adic topology
    and such that $\Jac(R)$ is finitely generated as a right ideal.

    Under the same hypothesis on $\catC$, we also show that functors from a finite category
    $\catU$ to $\catC$ admit a \emph{Krull-Schmidt Decomposition} (in the category of functors
    from $\catU$ to $\catC$). Furthermore, we show a version of Fitting's Lemma\footnote{
        Fitting's Lemma states that for every
        endomorphism $f$ of a finite-length $R$-module $M$, one has $M=(\im f^n)\oplus (\ker f^n)$
        for $n$ sufficiently large. Informally, this means that $f$ is the direct sum of an ``invertible part'' (namely,
        $f|_{\im f^n}$) and a ``nilpotent part'' (namely, $f|_{\ker f^n}$);
        see \cite[Prp.\ 2.9.7]{Ro88}.
    } for natural transformations
    $t:F\to F$ where $F$ is a functor from an arbitrary category to $\catC$.
}

\section{The Krull-Schmidt Theorem}
\label{section:Krull-Schmidt}

    Let $\catC$ be an additive category. We say that the \emph{idempotents of $\catC$ split}
    or that $\catC$ is \emph{pseudo-abelian}
    if for all $C\in\catC$ and an idempotent $e\in\End_{\catC}(C)$, there exists
    an object $S\in\catC$ and morphisms $i:S\to C$, $p:C\to S$ such that $i\circ p=e$ and $p\circ i=\id_S$. In this
    case, $(S,i,p)$ is called a \emph{summand} of $C$. Every additive category is a full subcategory of
    a pseudo-abelian category obtained by artificially adding the nonexisting summands. This
    category is called the \emph{Karubi envelope} of $\catC$; see \cite[Th.\ 6.10]{Kar78} for details.

    An object of $C\in\catC$ is said to have a Krull-Schmidt decomposition if $C=C_1\oplus\dots\oplus C_t$
    with $C_1,\dots,C_t$ indecomposable, and for every other such decomposition $C=C'_1\oplus\dots\oplus C'_{t'}$,
    we have $t=t'$ and $C_i\cong C'_{i}$ after suitable reordering of the $C_i$-s.
    We say that a
    pseudo-abelian category $\catC$ is a \emph{Krull-Schmidt category} (abbrev.: KS category) if every
    object in $\catC$ has a Krull-Schmidt decomposition.

    \smallskip

    Recall that a ring $R$ is  \emph{semiperfect} if $R/\Jac(R)$ is semilocal and $\Jac(R)$ is idempotent
    lifting (e.g.\ if $\Jac(R)$ is nil). Equivalently, a ring $R$
    is semiperfect if there are pairwise orthogonal idempotents $e_1,\dots,e_t$ such that $\sum_ie_i=1$
    and $e_iRe_i$ is local for all $i$; see \cite[\S2.7]{Ro88}.
    For example, if $R$ is semilocal and complete in the
    $\Jac(R)$-adic topology, i.e.\ $R=\invlim \{R/\Jac(R)^n\}_{n\in\N}$, then $R$ is semiperfect.

    Let $\catC$ be a pseudo-abelian category.
    The \emph{Krull-Schmidt Theorem} (which is also due to Azumaya and Remak; see \cite[Th.~2.9.17~ff.]{Ro88}) asserts
    that an object $C\in\catC$ has a Krull-Schmidt Decomposition when $\End_{\catC}(C)$
    is semiperfect. In particular, if all objects in $\catC$ have a semiperfect endomorphism ring,
    then $\catC$ is a KS category.

    \begin{example}\label{REP:EX:abelian-cats}
        Abelian categories in which all objects have finite length are KS categories.
        This follows from the Hadara-Sai Lemma (e.g.\ see \cite[Prp.\ 2.9.29]{Ro88}\footnote{
            The proof is phrased for modules
            but works in arbitrary abelian categories.
        }) which implies that the endomorphism ring of every indecomposable
        object of finite length is semiprimary.
    \end{example}

\section{Categorical Realizations of Quivers}
\label{section:realizations}

    In this section, we introduce categorical realizations of quivers.
    Many known objects are special cases of this construction: representations of quivers on categories,
    twisted representations of quivers in the sense of \cite{GothKing05} and related papers,
    the \emph{comma category} (e.g.\ see \cite[\S3K]{AdHerStr90AbstractAndConcreteCats}), and even systems of bilinear pairings.

\rem{
    Twisted representations of quivers were treated in \cite{GothKing05} and related papers.
    In this paper, we shall work with a more general definition,
    which we  describe in this section. The definition of \cite{GothKing05} as well as ``non-twisted'' representations and several
    other examples will be recovered as special cases of our definition.
}

    \medskip

    Recall that a \emph{quiver} $Q$ is a  directed graph (possibly with loops and multiple edges).
    We let $Q_0$ (resp.\ $Q_1$) denote  the set of vertices
    (resp.\ directed edges) in $Q$. \emph{We always assume $Q_0$ is finite, but do not
    require $Q_1$ to be finite.}
    The source (resp.\ target) of an edge $a\in Q_1$
    is denoted by $s(a)$ (resp.\ $t(a)$).
    A path $p$ in $Q$ consists of a sequence $(v_1,a_1,v_2,a_2,\dots,a_{n-1},v_n)$ ($n\geq 1$)
    where $v_1,\dots,v_n\in Q_0$ and $a_i$ is an edge from $v_i$ to $v_{i+1}$.
    In this case, we  let $s(p)=v_1$ and $t(p)=v_n$.
    The concatenation of two paths is defined in the obvious way.

    Fix a quiver $Q$ and assume
    that for
    every vertex $v\in Q_0$,
    we are given an additive category $\catC_v$, and for
    every arrow $a\in Q_1$, we are given an additive category $\catC_a$ and two additive
    functors $S_a:\catC_{s(a)}\to \catC_a$, $T_a:\catC_{t(a)}\to\catC_a$. The pair
    \[\catR:=\Circs{\{\catC_x\}_{x\in Q_0\cup Q_1}, \{S_a,T_a\}_{a\in Q_1}}\]
    is called a \emph{categorical realization} of $Q$.

    A representation of $Q$ on $\catR$
    consists
    of a collection of objects $\{C_v\}_{v\in Q_0}$ and morphisms
    $\{f_{a}\}_{a\in Q_1}$ such that $C_v\in\catC_v$ and $f_a\in\Hom_{\catC_a}(S_aC_{s(a)},T_aC_{t(a)})$
    for all $v\in Q_0$ and $a\in Q_1$.
    Denote by $\Rep(Q,\catR)$ the category of  representations of $Q$ on $\catR$.
    A morphism from $(\{C_v\},\{f_a\})$ to $(\{C'_v\},\{f'_a\})$ is a family of
    morphisms $\{\psi_v\}_{v\in Q_0}$ such that $\psi_v\in\Hom_{\catC_v}(C_v,C'_v)$, and  for all $a\in Q_1$,
    the following diagram commutes:
    \[
    \xymatrix{
    S_aC_{s(a)} \ar[r]^{f_a} \ar[d]_{S_a\psi_{s(a)}} & T_aC_{t(a)} \ar[d]^{T_a\psi_{t(a)}} \\
    S_aC'_{s(a)} \ar[r]^{f'_a} & T_aC'_{t(a)}
    }
    \]

    \begin{example}\label{REP:EX:examples-of-quivers}
        When the categories $\{\catC_x\}_{x\in Q_0\cup Q_1}$
        are equal to a fixed additive category $\catC$ and $S_a=T_a=\id_{\catC}$ for all $a\in Q_1$, we get the notion
        of a representation of $Q$ on $\catC$. The realization $\catR$ is then called
        the \emph{trivial realization} of $Q$ on $\catC$, and we  write $\Rep(Q,\catC)$
        instead of $\Rep(Q,\catR)$. When $\catC$ is the category
        of finite dimensional vector spaces over a field $F$, the representations
        of $Q$ on $\catC$ are just representations of $Q$ over the field $F$ in the classical sense (see
        \cite{AssemIbSim06Reps} for an extensive discussion about this  case).

        If $Q_0$ consists of a single vertex and $Q_1$ consists of $n$ arrows, then the
        isomorphism classes of
        $\Rep(Q,\catC)$ correspond to tuples of endomorphisms $(f_1,\dots,f_n)\in\End_{\catC}(C)$,
        considered up to simultaneous conjugacy. In fact, for $n=1$, $\Rep(Q,\catC)$
        is equivalent to the category of \emph{endomorphisms in $\catC$}.

        If $Q_0$ consists of two vertices $v_1,v_2$ and $Q_1$ consists of $n$ edges  from
        $v_1$ to $v_2$, then the isomorphism classes of $\Rep(Q,\catC)$ correspond
        to tuples of homomorphisms $(f_1,\dots,f_n)\in\Hom_{\catC}(C,C')$ considered up to
        simultaneous composition on the left and on the right by an isomorphism in $\catC$.
        In fact, when $n=1$, $\Rep(Q,\catC)$ is equivalent the category of \emph{morphisms in $\catC$}.
    \end{example}

    \begin{example}
        Let $Q$ be the quiver $u\xrightarrow{a} v$ and let $\catR$ be a categorical
        realization of $Q$. Then $\Rep(Q,\catR)$ is just the \emph{comma category}
        $S_a\downarrow T_a$ (see \cite[\S3K]{AdHerStr90AbstractAndConcreteCats}). The construction of $\Rep(Q,\catR)$
        is therefore a  generalization of comma categories.
    \end{example}

    \begin{example}\label{REP:EX:twisted-quivers}
        Let $R$ be a commutative ring and let $M:=\{M_a\}_{a\in Q_1}$ be
        a collection of right $R$-modules.
        Take $\catC_v=\catC_a=\rMod{R}$ and define $S_a(X)=X\otimes_R M$, $T_a(X)=X$
        for all $v\in Q_0$, $a\in Q_1$, $X\in\rMod{R}$. Then $\Rep(Q,\catR)$ is the
        category of $M$-twisted representations of $Q$ on $\rMod{R}$ in the sense of \cite[\S2]{GothKing05}.
        This example can be generalized by replacing $R$ with a ringed space $(X,\calO_X)$ and
        $\rMod{R}$ with the category of sheaves of $\calO_X$-modules.
    \end{example}

    \begin{example}\label{REP:EX:bilinear-pairings}
        Let $R$ and $S$ be two  rings and let $M$ be  a fixed $(R,S)$-bimodule.
        An \emph{$(R,S)$-bilinear pairing} (taking values in $M$)
        is a biadditive map $\omega:A\times B\to M$ with $A\in\lMod{R}$, $B\in\rMod{S}$
        satisfying
        $\omega(ra,bs)=r\cdot\omega(a,b)\cdot s$ for all $a\in A$, $b\in B$, $r\in R$, $s\in S$.
        Denote by $\catB$ the class of $(R,S)$-bilinear pairings. We make $\catB$ into
        a category by defining a homomorphism  from $\omega:A\times B\to M$ to $\omega':A'\times B'\to M$ to
        be a pair $(f,g)\in\Hom_R(A,A')\times \Hom_S(B',B)$ such
        that
        \[
        \omega'(fa,b')=\omega(a,gb')\qquad\forall~ a\in A,~b'\in B'\ .
        \]
        Composition is defined
        by the formula $(f',g')\circ (f,g)=(f'\circ f,g'\circ g)$. While
        the morphisms in $\catB$ are not the standard morphisms
        of bilinear pairings, the isomorphism classes in $\catB$
        are still isomorphism classes of bilinear pairing in the usual sense.

        We claim that the category $\catB$ is equivalent
        $\Rep(Q,\catR)$ for suitable $Q$ and $\catR$. Indeed, take $Q$ to be the quiver
        \mbox{$u\xrightarrow{~a~} v$}, and define $\catR$ by putting $\catC_u=\lMod{R}$, $\catC_v=(\rMod{S})^\op$, $\catC_a=\lMod{R}$,
        $S_a(X)=X$ and $T_a(X)=\Hom_S(X,M)$. An object in $\Rep(Q,\catR)$
        consists of a morphism of $R$-modules $A\to \Hom_S(B,M)$, whose datum is equivalent
        to an $(R,S)$-bilinear pairing $A\times B\to M$.

        It is also possible to treat systems of $(R,S)$-bilinear pairings $\{\omega_i:A\times B\to M_i\}_{i\in I}$
        in this way (where $\{M_i\}_{i\in I}$ is a fixed family of $(R,S)$-bimodules).
        Replace $Q$ with  a quiver consisting of two vertices $\{u,v\}$ and $|I|$ arrows $\{a_i\}_{i\in I}$
        from $u$ to $v$, put $\catC_{a_i}=\lMod{R}$ and define $T_{a_i}(X)=\Hom_S(X,M_i)$. Alternatively,
        any system $\{\omega_i\}_{i\in I}$ is equivalent to an $(R,S)$-bilinear pairing taking values in
        $\prod_{i\in I}M_i$.
    \end{example}

\rem{
    \begin{remark}
        When treating representations of $Q$ on \emph{arbitrary} categorical realizations $\catR$, one may assume that
        $Q$ does not have multiple edges. Indeed, let $\{a_i\}_{i\in I}$ be the set of edges
        from $u$ to $v$ ($u,v\in Q$), and let $Q'$ to be the quiver  obtained from $Q$ by
        replacing $\{a_i\}_{i\in I}$ with a single edge $a$. Define a categorical realization $\catR'=(\{\catC'_x\},\{T'_a,S'_a\})$
        of $Q'$ as follows: For all $x\in Q'_0\cup Q'_1\setminus\{a\}$ define $\catC'_x=\catC_x$
        and set $\catC'_a=\prod_{i\in I}\catC_{a_i}$ (i.e.\
        $\catC'_a$ is the product of the categories $\{\catC_{a_i}\}_{i\in I}$).
        For all $b\in Q'_1\setminus \{a\}$, let $S'_b=S_b$ and $T'_b=T_b$, and set
        $S'_a=\prod_{i\in I}S_{a_i}$ and $T'_a=\prod_{i\in I}T_{a_i}$.
        Then $\Rep(Q,\catR)$ is equivalent to $\Rep(Q',\catR')$.
        Repeating this procedure repeatedly yields a quiver $\quo{Q}$ without
        multiple edges and a categorical realization $\quo{\catR}$ such
        that $\Rep(Q,\catR)\sim\Rep(\quo{Q},\quo{\catR})$.
    \end{remark}
}

    We record for later the following easy proposition.

    \begin{prp}\label{REP:PR:reps-are-pseudo-abelian}
        The category $\Rep(Q,\catR)$ is additive. If moreover $\catC_v$ is pseudo-abelian
        for all $v\in Q_0$, then $\Rep(Q,\catR)$ is pseudo-abelian.
    \end{prp}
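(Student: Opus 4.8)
The plan is to check the additive‐category axioms on $\Rep(Q,\catR)$ component‐wise, using that each $\catC_v$ is additive, and then to lift idempotent splittings vertex by vertex. First I would define the biproduct of two representations $(\{C_v\},\{f_a\})$ and $(\{C'_v\},\{f'_a\})$ to be $(\{C_v\oplus C'_v\},\{g_a\})$, where $g_a$ is the morphism $S_a(C_{s(a)}\oplus C'_{s(a)})\to T_a(C_{t(a)}\oplus C'_{t(a)})$ obtained from $f_a$ and $f'_a$ via the canonical isomorphisms $S_a(X\oplus Y)\cong S_aX\oplus S_aY$ and $T_a(X\oplus Y)\cong T_aX\oplus T_aY$ (these exist because $S_a,T_a$ are additive functors between additive categories). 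The projections and injections of the biproduct in $\catC_v$ assemble into morphisms of representations — one must verify the relevant squares commute, which follows from naturality of the additive-functor isomorphisms together with the defining commuting squares for $f_a,f'_a$. Hom-sets are subgroups of $\prod_{v}\Hom_{\catC_v}(C_v,C'_v)$ (the subset cut out by the commuting-square conditions, which is visibly closed under addition since each $S_a,T_a$ is additive, hence each square condition is additive in $\{\psi_v\}$), so they are abelian groups and composition is bilinear; the zero object is $(\{0\},\{0\})$. This establishes that $\Rep(Q,\catR)$ is additive.

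For the second assertion, suppose each $\catC_v$ is pseudo-abelian, let $\underline{C}=(\{C_v\},\{f_a\})$ be a representation, and let $e=\{e_v\}_{v\in Q_0}$ be an idempotent in $\End_{\Rep(Q,\catR)}(\underline C)$. Then each $e_v\in\End_{\catC_v}(C_v)$ is idempotent, so splits as $e_v=i_v\circ p_v$ with $p_v\circ i_v=\id_{S_v}$ for some $S_v\in\catC_v$, $i_v\colon S_v\to C_v$, $p_v\colon C_v\to S_v$. I would then define morphisms $h_a\colon S_aS_{s(a)}\to T_aS_{t(a)}$ by $h_a:=(T_ap_{t(a)})\circ f_a\circ (S_ai_{s(a)})$, giving a candidate representation $\underline S=(\{S_v\},\{h_a\})$, and check that $\{i_v\}$ and $\{p_v\}$ are morphisms of representations $\underline S\to\underline C$ and $\underline C\to\underline S$. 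The compositions then satisfy $\{i_v\}\circ\{p_v\}=e$ and $\{p_v\}\circ\{i_v\}=\id_{\underline S}$ at each vertex, hence as morphisms in $\Rep(Q,\catR)$.

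The one genuine thing to verify — and the main obstacle, though it is still routine — is that $\{i_v\}$ and $\{p_v\}$ really are morphisms of representations, i.e.\ that the squares
\[
\xymatrix{
S_aS_{s(a)} \ar[r]^{h_a} \ar[d]_{S_ai_{s(a)}} & T_aS_{t(a)} \ar[d]^{T_ai_{t(a)}} \\
S_aC_{s(a)} \ar[r]^{f_a} & T_aC_{t(a)}
}
\]
commute (and similarly with $p$). For the $i$-square one computes $f_a\circ S_ai_{s(a)} = (T_ae_{t(a)})\circ f_a\circ S_ai_{s(a)}$, using that $\{e_v\}$ commutes with $\{f_a\}$ and then that $S_ai_{s(a)} = (S_ei_{s(a)})\circ(S_ap_{s(a)})\circ(S_ai_{s(a)})$ wait — more directly, $e_{s(a)}\circ i_{s(a)}=i_{s(a)}$ gives $S_ai_{s(a)}=(S_ae_{s(a)})\circ(S_ai_{s(a)})$, so the square for $\{e_v\}$ on $\underline C$ restricts appropriately; then $T_ae_{t(a)}=(T_ai_{t(a)})\circ(T_ap_{t(a)})$ yields the claim after substituting the definition of $h_a$. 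The $p$-square is dual. All of this is purely formal manipulation with the functoriality and additivity of $S_a,T_a$, so no new idea is needed beyond organizing the bookkeeping.
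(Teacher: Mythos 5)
Your proposal is correct and follows essentially the same route as the paper: the biproducts and hom-groups are formed vertex-wise, and the idempotent $e=\{e_v\}$ is split by splitting each $e_v$ in $\catC_v$ and equipping the summands $\{X_v\}$ with the arrows $T_ap_{t(a)}\circ f_a\circ S_ai_{s(a)}$, with the same computation verifying that $\{i_v\}$ and $\{p_v\}$ are morphisms of representations. No discrepancy to report.
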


    \begin{proof}
        The first assertion is straightforward. To see the second assertion, let $e:=\{e_v\}$ be an idempotent
        endomorphism of $\rho:=(\{C_v\},\{f_a\})\in\Rep(Q,\catR)$. Then each $e_v$ is an idempotent
        endomorphism of $C_v\in\catC_v$ and hence admits a corresponding summand $(X_v, i_v,p_v)$.
        Define $\sigma:=(\{X_v\},\{T_ap_{t(a)}\circ f_a\circ S_ai_{s(a)}\}_{a\in Q_1})$, $i=\{i_v\}$ and $p=\{p_v\}$.
        We claim that $(\sigma,i,p)$ is a summand of $C$ corresponding to $e$. Namely, $i\in\Hom(\sigma,\rho)$, $p\in\Hom(\rho,\sigma)$,
        $i\circ p=e$ and $p\circ i=\id_\sigma=\{\id_{X_v}\}_{v\in Q_0}$. All these assertions follow by a routine
        argument. For example, $i\in\Hom(\sigma,\rho)$ since $T_a i_{t(a)} \circ (T_ap_{t(a)}\circ f_a\circ S_ai_{s(a)})=
        T_a(i_{t(a)}\circ p_{t(a)})\circ f_a\circ S_ai_{s(a)}=T_ae_{t(a)}\circ f_a\circ S_ai_{s(a)}=
        f_a\circ S_ae_{s(a)}\circ S_ai_{s(a)}=f_a\circ S_a(e_a\circ i_{s(a)})=f_a\circ S_ai_{s(a)}$.
    \end{proof}

    \begin{remark}
        If the categories $\{C_x\}_{x\in Q_0\cup Q_1}$ are abelian,
        the functors $\{S_a\}_{a\in Q_1}$ are right exact, and the functors $\{T_a\}_{a\in Q_1}$ are left exact,
        then $\Rep(Q,\catR)$ is abelian. This
        fails if we drop the exactness assumptions, though. We have omitted the straightforward technical verification
        of these facts since
        we do not need them  here.
    \end{remark}

\section{Linearly Topologized Categories}
\label{section:LT}

    Recall that a topological ring $R$ is called \emph{linearly topologized} (abbrev.: LT)
    if $R$ admits a local basis (i.e.\ a basis of neighborhoods of $0$) consisting of two-sided ideals.
    For example, the $p$-adic integers $\Z_p$ are LT, but the $p$-adic numbers $\Q_p$ are not LT. In addition,
    every ring associated with the discrete topology is LT.
    When $R$ is LT, the topology on $R$ is completely determined by the set of open ideals in $R$, denoted
    $\calI_R$. Conversely, every filter base\footnote{
        Let $X$ be a set. A nonempty subset $\calF\subseteq P(X)$ is a filter base
        if $\emptyset\notin \calF$ and for all $A,B\in\calF$, there exists $C\in\calF$
        with $C\subseteq A\cap B$.
    } $\calF$ of ideals of $R$ gives rise to a unique topology on $R$ such
    that $\calF$ is a basis of neighborhoods of $0$; take the topology spanned by  $\{x+J\where x\in R,\, J\in\calF\}$.
    For an extensive discussion about topological rings and proofs of the previous facts,
    see \cite{Wa93}.

    Let $\catC$ be an additive category. Recall that an ideal $I$ of $\catC$
    is a map adjoining to every two objects $X,Y\in\catC$ an additive subgroup
    $I(X,Y)\subseteq \Hom_{\catC}(X,Y)$ such that $\Hom_{\catC}(Y,Y')\circ I(X,Y)\circ \Hom_{\catC}(X',X)\subseteq I(X',Y')$
    for all $X',Y'\in\catC$. For brevity, we write $I(X)$ instead of $I(X,X)$. The latter
    is clearly an ideal of $\End_{\catC}(X)$.

    \smallskip

    A \emph{linearly topologized} category consists of a pair $(\catC,\calI)$
    where $\catC$ is an additive category and $\calI$ is a collection
    of ideals of $\catC$ such that for all $I,J\in \calI$, there exists
    $K\in\calI$ with $K(X,Y)\subseteq I(X,Y)\cap J(X,Y)$ for all $X,Y$.
    We then call $\calI$ a \emph{linear topology} on $\catC$.
    In this case, for all $X,Y\in\catC$, the cosets $\{f+I(X,Y)\where f\in \Hom_{\catC}(X,Y),\,I\in\calI\}$
    span a topology on $\Hom_{\catC}(X,Y)$ which makes it
    into an additive topological group. Furthermore, composition is continuous, hence $\End_{\catC}(X)$
    is an LT ring for all $X\in\catC$. We say
    that $(\catC,\calI)$ is \emph{Hausdorff} if $\bigcap_{I\in\calI}I(X,Y)=0$
    for all $X,Y\in\catC$, i.e.\ the topology
    induced on $\Hom_{\catC}(X,Y)$ is Hausdorff For all $X,Y\in\catC$. If $(\catC',\calI')$ is another LT category,
    we say that an additive functor $F:\catC\to \catC'$ is continuous if for all $X,Y\in \catC$
    and $I'\in\calI'$, there exists $I\in\calI$ such that $F(I(X,Y))\subseteq I'(FX,FY)$. This is equivalent
    to saying that the map $F:\Hom_{\catC}(X,Y)\to \Hom_{\catC'}(FX,FY)$ is continuous for all $X,Y\in\catC$.

    We will often drop $\calI$ from the notation, specifying only $\catC$. In this
    case, we will write $\calI_{\catC}$ to denote the implicit collection $\calI$.

\rem{
    to be \emph{linearly topologized}
    if  for all $C\in\catC$, the ring $\End_{\catC}(C)$ is equipped with a linear ring topology, $\tau_C$,
    and for all $C'\in\End_{\catC}(C)$ the map
    \[
    (f,g)\mapsto \smallSMatII{f}{0}{0}{g}:\End_{\catC}(C)\times\End_{\catC}(C')\to\End_{\catC}(C\oplus C')
    \]
    is a \emph{topological embedding}. In this case,
    the collection $\{\tau_C\}_{C\in\catC}$ is called a \emph{linear topological structure}
    on $\catC$. A linearly topologized category $\catC$ is \emph{Hausdorff}
    if  $\tau_C$ is Hausdorff for all $C\in\catC$. A functor $F:\catC\to \catC'$ between two
    LT categories is \emph{continuous} if the map $F:\End_\catC(C)\to\End_{\catC'}(FC)$ is continuous
    for all $C\in\catC$.
}

    \begin{example}\label{REP:EX:LT-categories-I}
        Any additive category $\catC$ can be made into a Hausdorff LT category
        by taking $\calI=\{0\}$ where $0(X,Y)=\{0_{X,Y}\}$ for all $X,Y\in\catC$. In this case, any
        additive functor from $\catC$ to another LT category is continuous.
    \end{example}

    \begin{example}\label{REP:EX:LT-categories-II}
        Let $R$ be an LT ring. Abusing the notation, for every $I\in\calI_R$ and $M,N\in\rMod{R}$,
        define $I(M,N)=\Hom_R(M,NI)$. Then $(\rMod{R},\calI_R)$ is an LT category.
        We call $\calI_R$ the \emph{standard linear topology} of $\rMod{R}$.

        We say that a module $M\in\rMod{R}$ is \emph{Hausdorff} if $\bigcap_{J\in\calI_R}MJ=0$.
        Let $\rHMod{R}$ denote the full subcategory of $\rMod{R}$
        whose objects are the Hausdorff modules. Then $(\rHMod{R},\calI_R)$
        is a \emph{Hausdorff} LT category.
        Note that $\rHMod{R}$ contains the category of finitely generated projective right $R$-modules, denoted $\rproj{R}$.

        Let $\FP{R}$ denote the category of finitely presented right $R$-modules and let $\HFP{R}$
        denote the category of Hausdorff f.p.\ modules. Then $(\HFP{R},\calI_R)$ is a pseudo-abelian Hasudorff LT
        category containing $\rproj{R}$. The question of when $\FP{R}=\HFP{R}$ was considered in \cite[\S9]{Fi12C}.
        Several cases when this holds are:
        \begin{enumerate}
            \item[(i)] $R$ is an almost fully-bounded noetherian ring whose primitive images are artinian
            (e.g.\ a noetherian PI ring) and $R$ is endowed with the $\Jac(R)$-adic topology (\cite[Th.\ 3.2.28]{Ro88}).
            \item[(ii)] $R$ is \emph{strictly pro-right artinian}, namely, $R$ is the inverse limit
            of discrete right artinian rings $\{R_i\}_{i\in I}$
            and the standard map $R\to R_i$ is surjective for all $i$ (\cite[Cr.\ 9.8]{Fi12C}).
            For example, this holds when $R$ is semilocal, $R=\invlim \{R/\Jac(R)^n\}_{n\in\N}$
            and $\Jac(R)$ is f.g.\ as a right ideal.
            \item[(iii)] $R$ is a complete \emph{rank-$1$} valuation ring\footnote{
                To our purposes, a valuation ring $R$ is of rank $1$ if its \emph{additive} valuation
                takes values in $\R_{\geq 0}$.
            } considered as an LT using its valuation (\cite[Rm.\ 9.7]{Fi12C}).
        \end{enumerate}
    \end{example}

    \begin{example}\label{REP:EX:cont-functor}
        Let $R$ and $S$ be two LT rings and let $Q$ be an $(R,S)$-bimodule
        such that for all $J\in\calI_S$ there is $I\in\calI_R$ such that $IQ\subseteq QJ$.
        Then the functor $-\otimes_RQ:\rMod{R}\to\rMod{S}$ is continuous when both categories are endowed with
        the standard linear topology. Indeed, for $I$ and $J$ as above and $M,N\in\rMod{R}$,
        we have
        $I(M,N)\otimes \id_Q=\Hom_R(M,NI)\otimes\id_Q\subseteq\Hom_S(M\otimes Q,NI\otimes Q)
        \subseteq\Hom_S(M\otimes Q,N\otimes QJ)=J(M\otimes Q,N\otimes Q)$.
    \end{example}

\section{Fitting's Property}
\label{section:pi-regular}

    In this section, we recall the definition of Fitting's Property (abbrev.: FP) and bring several examples.
    Fitting's Property was introduced in \cite[\S5]{Fi12C}
    under the name quasi-$\pi_\infty$-regular. The name was changed in recommendation
    of several experts due to the overusage of the term ``regular''
    in ring theory.

    \begin{dfn}
        A Hausdorff LT ring $R$ satisfies \emph{Fitting's Property} (abbrev.: FP)
        if for all $n\in\N$ and $a\in \nMat{R}{n}$, there is an idempotent
        $e\in \nMat{R}{n}$ such that $a=eae+(1-e)a(1-e)$, $eae$ is invertible in $eRe$ and   $(1-e)a^n(1-e)\to 0$
        as $n\to \infty$.\footnote{
            This resembles Fitting's Lemma, hence the name ``Fitting's Property''.
        } Here, $\nMat{R}{n}$ is endowed with the product topology (view $\nMat{R}{n}$ as $R^{n^2}$).

        If the above condition is satisfied only when $n=1$, then $R$ is said to satisfy the \emph{weak Fitting Property}.
    \end{dfn}

    The idempotent $e$ in the definition turns out to be uniquely determined by the element $a$ (see \cite[Lm.\ 5.6]{Fi12C}).
    We call $e$ the \emph{associated idempotent of $a$}.

    \begin{example}\label{REP:EX:pi-reg-ring}
        (i) Let $R$ be a rank-$1$ valuation ring. Then $R$ is Henselian if and only if it has FP
        as an LT ring;
        see \cite[Prp.\ 8.6]{Fi12C}. Thus, Fitting's Property can be thought of as
        a generalization of Henselianity for topological rings.

        (ii) A non-topological ring $R$ has
        FP when given the discrete
        topology if and only if  it is \emph{(strongly) $\pi_\infty$-regular}\footnote{
            Another name in the literature is ``completely $\pi$-regular''.
        } (\cite[Prp.\ 4.2]{Fi12C}).
        The family of $\pi_\infty$-regular rings include semiprimary and right/left perfect rings\footnote{
            A ring is semiprimary (resp.\ right prefect) if it is semilocal and its Jacobson
            radical is nilpotent (resp.\ right
            T-nilpotent).
        } (\cite[Rm.\ 2.9]{Fi12C}).
        In particular, discrete left/right artinian rings have FP.

        (iii) An LT ring $R$ is called \emph{pro-semiprimary} if it is the inverse limit
        of discrete semiprimary rings.
        For example, $\Z_p$ is pro-semiprimary since $\Z_p\cong \invlim\{\Z/p^n\Z\}_{n\in\N}$ (as topological rings).
        By \cite[Lm.\ 5.14]{Fi12C}, every pro-semiprimary  ring
        has FP.

        (iv) A product of LT rings satisfying FP has FP  when given the
        product topology.
    \end{example}

    Although not needed here, we note that an LT ring with FP
    is semiperfect
    if and only if $R$ does not contain an infinite set of pairwise orthogonal idempotents;
    see \cite[Lm.~5.9]{Fi12C}.

    \medskip

    Let $R$ be a Hausdorff LT ring and let $R_0\subseteq R$ be a subring.
    According to \cite[\S5]{Fi12C}, the subring $R_0$ is called a
    (topologically) \emph{semi-centralizer subring} if there exists a Hausdorff LT ring $S$, containing $R$ as a topological ring, and
    a subset $X\subseteq S$ such that $R_0=\Cent_R(X):=\{r\in R\suchthat rx=xr~ \forall x\in X\}$.
    By \cite[\S5]{Fi12C}, we have:

    \begin{thm}\label{REP:TH:semi-cent-subring}
        Let $R$ be LT ring satisfying FP. Then every semi-centralizer subring of $R$
        has FP (w.r.t.\ the induced topology).
        If moreover $R$ is semiperfect, then every semi-centralizer subring $R_0$ of $R$
        is semiperfect and satisfies $\Jac(R_0)^n\subseteq \Jac(R)$ for some $n$.
    \end{thm}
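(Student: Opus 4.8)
The plan is to reduce everything to the case $R_0 = \Cent_R(X)$ for a single subset $X$ of an ambient Hausdorff LT ring $S \supseteq R$, and then to transfer the FP-witnessing idempotent from $R$ into $R_0$. The key observation is that FP is about matrix rings: $R_0$ has FP iff for each $n$ and each $a \in \nMat{R_0}{n}$ there is a suitable idempotent $e \in \nMat{R_0}{n}$ with $a = eae + (1-e)a(1-e)$, $eae$ invertible in $e\nMat{R_0}{n}e$, and $(1-e)a^n(1-e)\to 0$. Since $R_0 \subseteq R$ is a topological subring, $\nMat{R_0}{n}$ is a topological subring of $\nMat{R}{n}$, and the latter has FP because $R$ does (apply the definition of FP for $R$ at size $n^2$, or simply note $\nMat{R}{n}$ inherits FP — this should already be implicit in \cite[\S5]{Fi12C}). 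Moreover $\nMat{R_0}{n}$ is itself a semi-centralizer subring of $\nMat{R}{n}$: if $R_0 = \Cent_R(X)$ inside $S$, then $\nMat{R_0}{n}$ is the centralizer inside $\nMat{S}{n}$ of the set consisting of the scalar matrices $\{\mathrm{diag}(x,\dots,x) : x \in X\}$ together with the matrix units $\{E_{ij}\}$. Hence it suffices to prove the theorem for $n=1$, i.e.\ to show: if $R$ has FP and $R_0 = \Cent_R(X)$, then every $a \in R_0$ admits its associated idempotent $e$ (computed in $R$) already lying in $R_0$.

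So fix $a \in R_0$ and let $e \in R$ be its associated idempotent, which exists by FP of $R$ and is \emph{uniquely} determined by $a$ (\cite[Lm.\ 5.6]{Fi12C}). I want to show $e \in R_0$, i.e.\ $ex = xe$ for all $x \in X$. Here is where the ambient ring $S$ does the work: $e$ is characterized inside $R$ by the FP conditions relative to $a$, but I can also read those conditions off inside $S$. The point is that $a$ commutes with every $x \in X$ (as $a \in R_0 = \Cent_R(X)$), so conjugation by... more precisely, for each $x \in X$, consider that $x a x^{-1}$ doesn't make sense (no inverses), so instead argue via uniqueness directly: the properties ``$e$ idempotent'', ``$a = eae + (1-e)a(1-e)$'', ``$eae$ invertible in $eRe$'', ``$(1-e)a^n(1-e) \to 0$'' are preserved under any continuous ring automorphism of $R$ fixing $a$. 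If $x \in X$ were a unit in $S$, then conjugation by $x$ would be such an automorphism of $\Cent_R(X)$... but that is too strong. The cleaner route, and the one I expect \cite{Fi12C} to use, is the following algebraic identity approach: from $a = eae + (1-e)a(1-e)$ one gets $ea = eae = ae$, so $e$ commutes with $a$; and the associated idempotent can be recovered as $e = \lim_{k} (\text{some explicit polynomial expression, e.g.\ } a^{k!}b^{k!}$ for a suitable quasi-inverse $b$, or via the idempotent attached to the "invertible part"). Any such limit of polynomials in $a$ (with no other ring elements involved) automatically lies in $\Cent_R(X)$ since $X$ commutes with $a$ and centralizers are closed subrings (closedness: $\Cent_R(X) = \bigcap_{x\in X}\ker(r \mapsto rx - xr)$, an intersection of closed sets since $R$ is a topological ring). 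Hence $e \in R_0$, and $a = eae + (1-e)a(1-e)$ is the required FP decomposition inside $R_0$, with $eae$ invertible in $eR_0e$ (invertibility of $eae$ in $eRe$ together with its inverse being a limit of polynomials in $eae$ forces the inverse into $eR_0e$). This proves $R_0$ has FP.

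For the second assertion, assume in addition that $R$ is semiperfect. By the remark preceding the theorem, a Hausdorff LT ring with FP is semiperfect iff it contains no infinite set of pairwise orthogonal idempotents; since $R$ is semiperfect it has no such infinite set, and neither does the subring $R_0$, so $R_0$ — which has FP by the first part — is semiperfect. Finally, for $\Jac(R_0)^n \subseteq \Jac(R)$: a semiperfect LT ring with FP decomposes as $R_0 = e_1 R_0 e_1 \oplus \cdots \oplus e_t R_0 e_t$ (Peirce) with each $e_i R_0 e_i$ local, so $\Jac(R_0)$ is the ideal generated by the off-diagonal Peirce components plus the radicals of the diagonal local blocks; one checks that in $R$ these off-diagonal elements and the block radicals are topologically nilpotent (the associated-idempotent machinery shows any $r \in \Jac(R_0)$ has $r^n \to 0$ in $R$, since otherwise its associated idempotent in $R$ would be nonzero, contradicting $r$ non-invertible-modulo-any-block), and since $R$ is semiperfect its topologically nilpotent elements raised to a fixed power $n$ land in $\Jac(R)$. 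I expect the main obstacle to be precisely this last step — pinning down why there is a \emph{uniform} $n$ with $\Jac(R_0)^n \subseteq \Jac(R)$ rather than just $\Jac(R_0) \subseteq \sqrt{\Jac(R)}$ in some topological sense — and this is where one must invoke the finite Peirce decomposition of the semiperfect ring $R_0$ to bound the nilpotency exponent by (a function of) $t = $ the number of blocks.
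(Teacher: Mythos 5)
First, a point of reference: the paper does not prove this theorem at all --- it is quoted verbatim from \cite[\S 5]{Fi12C} (``By \cite[\S5]{Fi12C}, we have:''), so there is no in-paper argument to compare yours against. Judged on its own terms, your reconstruction identifies the right strategic skeleton for the first claim but leaves its load-bearing step unproven. Everything hinges on the assertion that the associated idempotent $e$ of $a$ is a limit of polynomial expressions in $a$ (equivalently, lies in the closure of $\Z[a]$, hence in every closed subring of $S$ containing $a$, such as $\Cent_S(X)$). You explicitly flag this as what you ``expect \cite{Fi12C} to use'' rather than establish it; but this double-centralizer property of $e$ \emph{is} the content of the theorem --- uniqueness of $e$ alone (which is what \cite[Lm.\ 5.6]{Fi12C} gives you) does not yield it, since there is no automorphism of $R$ fixing $a$ to play uniqueness against, as you yourself discover mid-paragraph. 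Two smaller defects in the same part: your description of $\nMat{R_0}{n}$ as the centralizer of the scalar matrices $\mathrm{diag}(x,\dots,x)$ \emph{together with the matrix units} is wrong --- centralizing the matrix units cuts you down to scalar matrices, so you would get a diagonal copy of $R_0$, not $\nMat{R_0}{n}$; drop the matrix units (or skip the reduction entirely, since the polynomial-limit argument works verbatim for $a\in\nMat{R_0}{n}$ inside $\nMat{R}{n}$). And the claim that the inverse of $eae$ in $eRe$ is a limit of polynomials in $eae$ is unjustified; the correct (and easy) argument is that a unit of the corner ring $eSe$ commuting with $exe$ has its corner-inverse commuting with $exe$ as well, whence $u^{-1}\in\Cent_S(X)\cap R$.

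The semiperfectness step via ``no infinite orthogonal family of idempotents'' is clean and correct. The final claim $\Jac(R_0)^n\subseteq\Jac(R)$, however, is a genuine gap that you acknowledge but do not close. Your observation that each $r\in\Jac(R_0)$ is topologically nilpotent in $R$ can be made rigorous (its associated idempotent lies in $\Jac(R_0)$, hence vanishes), but elementwise topological nilpotence does not give a uniform exponent for the \emph{ideal} $\Jac(R_0)^n$, whose elements are sums of products of distinct radical elements. The missing argument is to pass to $\quo{R}=R/\Jac(R)$, show that the image of $\Jac(R_0)$ there is a \emph{nil subring} of the semisimple artinian ring $\quo{R}$ (this itself requires relating topological nilpotence in $R$ to nilpotence modulo $\Jac(R)$, i.e.\ some compatibility of the topology with $\Jac(R)$ in a semiperfect FP ring), and then invoke the fact that a nil subring of a semisimple artinian ring is nilpotent of index bounded by the composition length. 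Your appeal to the Peirce decomposition of $R_0$ does not by itself produce this bound.
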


    The condition $\Jac(R_0)^n\subseteq \Jac(R)$ in the theorem implies that the semi-centralizer subrings
    of semiprimary (resp.\ right perfect) rings are  semiprimary (resp.\ right perfect).

    \smallskip

    We finish with recalling the following facts:

    \begin{prp}\label{REP:PR:inverse-image-of-inv-subring}
        Let $R$ and $S$ be Hausdorff LT rings, let $\vphi:R\to S$ be a continuous ring homomorphism,
        and let $S_0\subseteq S$ be a semi-centralizer subring of $S$. Then $\vphi^{-1}(S_0)$
        is a semi-centralizer subring of $R$.
    \end{prp}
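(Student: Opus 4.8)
The plan is to unwind the definition of a semi-centralizer subring and then manufacture, from the data witnessing that $S_0$ is one, an ambient ring witnessing that $\vphi^{-1}(S_0)$ is one. By definition there are a Hausdorff LT ring $T$ containing $S$ as a topological ring and a subset $X\subseteq T$ with $S_0=\Cent_S(X)$; write $\iota\colon S\hookrightarrow T$ for the inclusion. The first step is to record that $\psi:=\iota\circ\vphi\colon R\to T$ is a continuous ring homomorphism (since $\vphi$ is continuous and $\iota$ is a topological embedding), and that $\vphi^{-1}(S_0)$ is a subring of $R$ carrying the subspace topology — it contains $1_R$ because $\vphi(1_R)=1_S\in S_0$ — so that the question makes sense.

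Next I would take the ambient ring to be $U:=R\times T$, equipped with the product topology. Since a (finite) product of Hausdorff LT rings is again a Hausdorff LT ring, $U$ qualifies. Then I would embed $R$ into $U$ through its graph, $\eta\colon R\to U$, $\eta(r)=(r,\psi(r))$: this is an injective ring homomorphism, it is continuous because $\psi$ is, and its inverse onto the image is the restriction of the first projection $U\to R$, hence continuous; so $\eta$ is a topological embedding and identifies $R$ with a topological subring of $U$.

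Finally I would set $Y:=\{(0,x)\mid x\in X\}\subseteq U$ and compute the centralizer. For $r\in R$, identified with $\eta(r)=(r,\psi(r))$, one has $\eta(r)\cdot(0,x)=(0,\psi(r)x)$ and $(0,x)\cdot\eta(r)=(0,x\psi(r))$, so $\eta(r)$ commutes with every element of $Y$ exactly when $\psi(r)$ commutes in $T$ with every element of $X$; since $\psi(r)=\iota(\vphi(r))$ and $\iota$ is an inclusion of rings, this means precisely $\vphi(r)\in\Cent_S(X)=S_0$. Thus $\Cent_R(Y)=\{r\in R\mid\vphi(r)\in S_0\}=\vphi^{-1}(S_0)$, which exhibits $\vphi^{-1}(S_0)$ as a semi-centralizer subring of $R$.

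I do not expect a real obstacle: the whole argument is driven by the choice of the product ring $U=R\times T$ together with the graph embedding $r\mapsto(r,\psi(r))$, and once that is in place the three things to check — that $U$ is Hausdorff LT, that the graph embedding is a topological embedding (this is the only place continuity of $\vphi$ enters), and that conjugating by $(0,x)$ in $U$ detects commutation with $x$ in $T$ — are all immediate. The one cosmetic point is the convention about ``$U$ contains $R$ as a topological ring'': if a literal inclusion is wanted rather than a topological embedding, one simply renames $\eta(R)$ as $R$ inside $U$.
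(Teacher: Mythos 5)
Your proof is correct and complete, and all the points that need checking do check out: the product $U=R\times T$ of Hausdorff LT rings is Hausdorff LT (products of open ideals form a local basis of ideals with zero intersection); the graph map $\eta(r)=(r,\psi(r))$ is a unity-preserving ring homomorphism that is a topological embedding exactly because $\psi=\iota\circ\vphi$ is continuous (the inverse being a restriction of the first projection); and the computation $\eta(r)\cdot(0,x)=(0,\psi(r)x)$ versus $(0,x)\cdot\eta(r)=(0,x\psi(r))$ gives $\Cent_{\eta(R)}(Y)=\eta(\vphi^{-1}(S_0))$, so after identifying $R$ with $\eta(R)$ one gets $\vphi^{-1}(S_0)=\Cent_R(Y)$, with the induced topology on $\vphi^{-1}(S_0)$ agreeing with the subspace topology from $R$, as the subsequent applications require. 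The paper does not argue this directly: it defers to \cite[Cr.\ 3.2]{Fi12C} for the non-topological statement and to \cite[Prp.\ 5.4]{Fi12C} to transport it to the topological setting. Your route is therefore genuinely different in presentation --- a self-contained construction of an explicit ambient ring and witnessing set via the product-and-graph trick --- and its advantage is that it makes the proposition independent of the external reference; the paper's route has the advantage of reusing the machinery of \cite{Fi12C}, where the closure properties of semi-centralizer subrings and the compatibility of topologies are established once and uniformly. The only cosmetic point, which you already flag, is replacing the embedding $\eta$ by a literal inclusion to match the wording of the definition.
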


    \begin{proof}
        See \cite[Cr.\ 3.2]{Fi12C}. The proof is phrased for non-topological rings but works
        for topological rings by  \cite[Prp.\ 5.4]{Fi12C}.
    \end{proof}

    \begin{prp}\label{REP:PR:intersection-of-centralizer-subrings}
        The intersection of semi-centralizer subrings is a semi-centralizer subring.
    \end{prp}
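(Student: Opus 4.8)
The plan is to realize the intersection as a single centralizer inside a product of the ambient rings. Let $\{R_i\}_{i\in I}$ be a family of semi-centralizer subrings of $R$, and for each $i$ fix a Hausdorff LT ring $S_i$ containing $R$ as a topological subring, together with a subset $X_i\subseteq S_i$, such that $R_i=\Cent_R(X_i)$. Set $S:=\prod_{i\in I}S_i$ with the product topology. First I would check that $S$ is again a Hausdorff LT ring: the ideals $\prod_i J_i$ with $J_i\in\calI_{S_i}$ for finitely many $i$ and $J_i=S_i$ otherwise form a local basis of two-sided ideals, and if $0\neq a=(a_i)_i\in S$, then $a_i\notin J_i$ for some $i$ and some $J_i\in\calI_{S_i}$ (by Hausdorffness of $S_i$), so $a$ lies outside the open ideal $J_i\times\prod_{j\neq i}S_j$.

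Next I would verify that the diagonal map $\Delta\colon R\to S$, $r\mapsto(r)_{i\in I}$, is a topological ring embedding, and then identify $R$ with $\Delta(R)\subseteq S$. It is evidently an injective ring homomorphism; for the topologies, $\pi_i\circ\Delta$ is the given embedding $R\hookrightarrow S_i$ for each $i$, so the subspace topology on $\Delta(R)$, being the initial topology for the maps $\pi_i|_{\Delta(R)}$, already agrees with the original topology of $R$ (via any single index $i$). Thus $S$ is an admissible ambient ring for $R$ in the sense of the definition of semi-centralizer subring.

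Finally I would choose the right subset of $S$. For $i\in I$ and $x\in X_i$, let $\xi_{i,x}\in S$ be the element with $i$-th coordinate equal to $x$ and all other coordinates equal to $0$, and put $X:=\{\xi_{i,x}\suchthat i\in I,\ x\in X_i\}$. A coordinatewise computation gives $\Delta(r)\,\xi_{i,x}=\xi_{i,x}\,\Delta(r)$ in $S$ if and only if $rx=xr$ in $S_i$. Hence
\[
\Cent_R(X)=\{\,r\in R\suchthat rx=xr\text{ in }S_i\ \text{ for all }i\in I,\ x\in X_i\,\}=\bigcap_{i\in I}\Cent_R(X_i)=\bigcap_{i\in I}R_i ,
\]
exhibiting $\bigcap_{i\in I}R_i$ as a semi-centralizer subring of $R$. (If $I=\emptyset$, the intersection is $R$, which is a semi-centralizer subring of itself via $S=R$, $X=\emptyset$.)

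I expect the only delicate point to be checking that $\Delta$ is a topological embedding — that replacing $R$ by its diagonal copy inside the product does not alter the induced topology — with the rest being a routine unwinding of definitions; the key idea is simply the choice of the coordinate-supported elements $\xi_{i,x}$, which decouples the commutation conditions across the factors.
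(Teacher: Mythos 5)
Your construction is correct: forming $S=\prod_i S_i$ with the product topology, embedding $R$ diagonally, and centralizing the coordinate-supported elements $\xi_{i,x}$ does exhibit $\bigcap_i R_i$ as a semi-centralizer subring, and your verification that the diagonal is a topological embedding (the initial topology for the restricted projections coincides with the given topology of $R$ because each single inclusion $R\hookrightarrow S_i$ already induces it) is sound. The paper itself gives no argument here, deferring entirely to \cite[Prp.~5.4(e)]{Fi12C}; your product-plus-diagonal argument is precisely the standard proof underlying that citation, so you have in effect supplied the omitted details rather than taken a different route. The only point worth flagging is set-theoretic: the product $\prod_i S_i$ requires the family to be indexed by a set, which is implicit in the statement but could be mentioned.
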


    \begin{proof}
        This follows from \cite[Prp.\ 5.4(e)]{Fi12C}.
    \end{proof}

\section{Main Result}
\label{section:main}


    Let $Q$ be a quiver and let $\catR=\Circs{\{\catC_x\}_{x\in Q_0\cup Q_1}, \{S_a,T_a\}_{a\in Q_1}}$
    be a categorial realization of $Q$. We say that $\catR$ is \emph{pseudo-abelian} if
    each of the categories $\{\catC_v\}_{v\in Q_0}$ is psuedo-abelian, and
    say that $\catR$ is \emph{linearly topologized}
    if each of the categories $\{\catC_v\}_{v\in Q_0}$ is linearly topologized.
    In the latter case, we
    make $\Rep(Q,\catR)$ into an LT category as follows: Write $\calI_v=\calI_{\catC_v}$.
    For every $I:=\{I_v\}_{v\in Q_0}$ with $I_v\in\calI_v$
    and $\rho=(\{C_v\},\{f_a\}),\rho'=(\{C'_v\},\{f'_a\})\in\Rep(Q,\catR)$ define
    $I(\rho,\rho')=\Hom(\rho,\rho')\bigcap\prod_{v\in Q_0}I_v(C_v,C'_v)$. Then
    $\{\{I_v\}_{v\in Q_0}\where I_v\in \calI_v\}$ is a linear topology on $\Rep(Q,\catR)$. (In case $Q_0$
    is not assumed to be finite, one should adjust the definition by
    assuming that $I_v=\Hom_{\catC_v}$ for almost all $v\in Q_0$.)
    Notice that the topology on $\Hom(\rho,\rho')$ is the topology induced
    from the the product topology on $\prod_{v\in Q_0}\Hom_{\catC_v}(C_v,C'_v)$.

    Assume $\catR$ is LT. We say that $\catR$ is \emph{Hausdorff} if each of the LT categories
    $\{\catC_v\}_{v\in Q_0}$ is Hausdorff, and for all $a\in Q_1$, $C\in\catC_{s(a)}$ and $C'\in\catC_{t(a)}$
    we have
    \begin{equation}\label{REP:EQ:Hausdorff-cond}\tag{$*$}
        0~=\bigcap_{\substack{I\in\calI_{s(a)} \\ I'\in\calI_{t(a)}}}
        \Big({T_aI'(C')\circ \Hom_{\catC_a}(S_aC,T_aC')+\Hom_{\catC_a}(S_aC,T_aC')\circ S_aI(C)}\Big)\ .
    \end{equation}
    It is easy to see that in this case $\Rep(Q,\catR)$ is also Hausdorff (this is true
    even without the condition \eqref{REP:EQ:Hausdorff-cond}).
    The reason for condition \eqref{REP:EQ:Hausdorff-cond} will be seen in the proof of Theorem~\ref{REP:TH:main}
    below.

    \begin{example}\label{REP:EX:cont-realizations}
        (i) If $\catR$ is the trivial realization
        of $Q$ on a Hausdorff LT category $\catC$, then $\catR$ is Hausdorff (straightforward).

        (ii) Assume that each of the categories $\{\catC_x\}_{x\in Q_0\cup Q_1}$
        is Hausdorff LT and that the functors $\{S_a,T_a\}_{a\in Q_1}$
        are continuous. Then $\catR$ is Hausdorff. Indeed,
        we only need to verify \eqref{REP:EQ:Hausdorff-cond}: Let $a\in Q_1$
        and let $C\in \catC_{s(a)}$, $C'\in\catC_{t(a)}$. Since $S_a$ and $T_a$
        are continuous, for all $J\in\calI_a$, there are $I\in\calI_{s(a)}$ and $I'\in\calI_{t(a)}$
        such that $S_aI(C)\subseteq J(S_aC)$ and $T_aI'(C')\subseteq J(T_aC')$.
        Therefore,
        \begin{align*}
        T_aI'(C')\circ \Hom_{\catC_a}(S_aC,T_aC')+\Hom_{\catC_a}(S_aC,T_aC')\circ S_aI(C) &\subseteq \\
        J(T_aC')\circ\Hom_{\catC_a}(S_aC,T_aC')+\Hom_{\catC_a}(S_aC,T_aC')\circ J(S_aC) &\subseteq \\
        J(S_aC,T_aC')+J(S_aC,T_aC')&= J(S_aC,T_aC')\ .
        \end{align*}
        Since $\catC_a$ is Hausdorff, $\bigcap_{J\in\calI_a}J(S_aC,T_aC')=0$, so \eqref{REP:EQ:Hausdorff-cond} holds.
    \end{example}

    \begin{thm}\label{REP:TH:main}
        Let $Q$ be a quiver and let $\catR$  be a Hausdorff LT pseudo-abelian categorical realization
        such that $\End_{\catC_v}(C)$ is semiperfect and has Fitting's Property for all $v\in Q_0$, $C\in\catC_v$.
        Then $\Rep(Q,\catR)$ is a Hausdorff LT pseudo-abelian category and $\End(\rho)$ is
        semiperfect and has FP for all $\rho\in \Rep(Q,\catR)$.
        In particular, $\Rep(Q,\catR)$ is a Krull-Schmidt category.
    \end{thm}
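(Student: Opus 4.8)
The plan is to reduce the ``in particular'' clause to the endomorphism-ring assertion and then to identify $\End(\rho)$ as a semi-centralizer subring. By Proposition~\ref{REP:PR:reps-are-pseudo-abelian}, $\Rep(Q,\catR)$ is pseudo-abelian, and by the paragraph preceding the theorem it is Hausdorff LT; so once we know that $\End(\rho)$ is semiperfect (and has FP) for every $\rho=(\{C_v\},\{f_a\})\in\Rep(Q,\catR)$, the Krull--Schmidt Theorem recalled in Section~\ref{section:Krull-Schmidt} yields that $\Rep(Q,\catR)$ is a KS category. To prove the endomorphism-ring assertion I would build an auxiliary Hausdorff LT ring containing $\End(\rho)$ as a topological semi-centralizer subring, and then invoke Theorem~\ref{REP:TH:semi-cent-subring}.

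Fix $\rho=(\{C_v\},\{f_a\})$ and put $R=\prod_{v\in Q_0}\End_{\catC_v}(C_v)$. Since $Q_0$ is finite and each $\End_{\catC_v}(C_v)$ is semiperfect, Hausdorff LT and has FP, the ring $R$ is semiperfect (a finite product of semiperfect rings is semiperfect), is Hausdorff LT with the product topology, and has FP by Example~\ref{REP:EX:pi-reg-ring}(iv); moreover the topology on $\End(\rho)$ coming from $\Rep(Q,\catR)$ is exactly the one it inherits as a subring of $R$. For each $a\in Q_1$ set $M_a=\Hom_{\catC_a}(S_aC_{s(a)},T_aC_{t(a)})$; the additive functors $S_a,T_a$ induce ring homomorphisms $\End_{\catC_{s(a)}}(C_{s(a)})\to\End_{\catC_a}(S_aC_{s(a)})$ and $\End_{\catC_{t(a)}}(C_{t(a)})\to\End_{\catC_a}(T_aC_{t(a)})$, which make $M_a$ an $R$-bimodule via $r\cdot m=T_a(r_{t(a)})\circ m$ and $m\cdot r=m\circ S_a(r_{s(a)})$ for $r=(r_v)_v\in R$. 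Let $N=\bigoplus_{a\in Q_1}M_a$ and let $S=R\ltimes N$ be the trivial (square-zero) extension: its underlying group is $R\oplus N$, with multiplication $(r,n)(r',n')=(rr',\,rn'+nr')$. Then $r\mapsto(r,0)$ embeds $R$ as a subring of $S$, and for $r\in R$ the element $(r,0)$ commutes with $x_a:=(0,f_a)$ if and only if $T_a(r_{t(a)})\circ f_a=f_a\circ S_a(r_{s(a)})$, i.e.\ if and only if $\{r_v\}_v$ is an endomorphism of $\rho$. Hence $\End(\rho)=\Cent_R\big(\{x_a:a\in Q_1\}\big)$.

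It remains to put a Hausdorff linear topology on $S$ for which $R\hookrightarrow S$ is a topological embedding; this is where hypothesis~\eqref{REP:EQ:Hausdorff-cond} enters, and it is the only non-routine point. For $I=\{I_v\}_{v\in Q_0}$ with $I_v\in\calI_{\catC_v}$, set $\bar I=\prod_v I_v(C_v)\subseteq R$ and, for $a\in Q_1$,
\[
N_a^I=T_a\big(I_{t(a)}(C_{t(a)})\big)\circ M_a+M_a\circ S_a\big(I_{s(a)}(C_{s(a)})\big)\subseteq M_a,
\]
and let $J_I=\bar I\oplus\bigoplus_{a}N_a^I$. Using that each $I_v(C_v)$ is a two-sided ideal of $\End_{\catC_v}(C_v)$, one checks directly that every $J_I$ is a two-sided ideal of $S$; that $\{J_I\}$ is a filter base of ideals (refine the $I_v$ coordinatewise, using that each $\calI_{\catC_v}$ is a linear topology), hence defines a linear topology on $S$; and that $J_I\cap R=\bar I$, so that $R$ carries its original product topology as a subspace of $S$. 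For Hausdorffness, any element of $\bigcap_I J_I$ has $R$-component in $\prod_v\bigcap_{I_v\in\calI_{\catC_v}}I_v(C_v)=0$ (each $\catC_v$ being Hausdorff) and, for each $a$, an $M_a$-component lying in $\bigcap_{I}N_a^I$, which is exactly the intersection appearing in~\eqref{REP:EQ:Hausdorff-cond} and hence is $0$; thus $\bigcap_I J_I=0$. Therefore $S$ is a Hausdorff LT ring containing the Hausdorff LT ring $R$ topologically, so $\End(\rho)=\Cent_R(\{x_a\})$ is a semi-centralizer subring of $R$. Since $R$ has FP and is semiperfect, Theorem~\ref{REP:TH:semi-cent-subring} gives that $\End(\rho)$ is semiperfect and has FP, and the theorem follows as explained in the first paragraph. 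The main obstacle is thus the topological bookkeeping for $S$ — in particular, recognizing that~\eqref{REP:EQ:Hausdorff-cond} is exactly what makes $S$ Hausdorff.
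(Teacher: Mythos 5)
Your proof is correct and follows the same strategy as the paper's: reduce everything to showing that $\End(\rho)$ is a (topological) semi-centralizer subring of $R=\prod_{v\in Q_0}\End_{\catC_v}(C_v)$ and then quote Theorem~\ref{REP:TH:semi-cent-subring}, with condition~\eqref{REP:EQ:Hausdorff-cond} entering precisely to make the auxiliary overring Hausdorff. The only difference is in how that overring is built. The paper works one arrow at a time: for each $a\in Q_1$ it forms the triangular matrix ring $W_a=\smallSMatII{\End(C_{t(a)})}{H_a}{0}{\End(C_{s(a)})}$, writes $\End(\rho)=\bigcap_a\vphi_a^{-1}(\Cent_{W_a}(\alpha_a))$ with $\alpha_a=\smallSMatII{0}{f_a}{0}{0}$, and then invokes Propositions~\ref{REP:PR:inverse-image-of-inv-subring} and~\ref{REP:PR:intersection-of-centralizer-subrings} to conclude that this intersection of preimages is a semi-centralizer subring of $R$. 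You instead assemble a single square-zero extension $S=R\ltimes\bigoplus_a M_a$ containing $R$ topologically and exhibit $\End(\rho)=\Cent_R(\{x_a\})$ directly, so you need only the definition of semi-centralizer subring and Theorem~\ref{REP:TH:semi-cent-subring}, bypassing the two propositions; the price is that you must verify by hand that the $J_I$ are ideals forming a filter base and that $J_I\cap R=\bar I$, which the paper gets for free from the per-arrow formalism. One small point to tidy: for a loop $a$ (i.e.\ $s(a)=t(a)$) your $\bigcap_I N_a^I$ is the \emph{diagonal} sub-intersection of the one in~\eqref{REP:EQ:Hausdorff-cond}, not literally that intersection; it still vanishes because for any $I,I'\in\calI_{s(a)}$ there is $K\in\calI_{s(a)}$ contained in both, so the diagonal family is cofinal. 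With that remark your argument is complete.
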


    \begin{proof}
        That $\Rep(Q,\catR)$ is a Hausdorff LT category is clear. It is pseudo-abelian by Proposition~\ref{REP:PR:reps-are-pseudo-abelian}.
        Thus, it is left to verify that for all $\rho=(\{C_v\},\{f_a\})\in\Rep(Q,\catR)$,
        the endomorphism ring of $\rho$ is semiperfect and has FP.
        Recall that $\End(\rho)$ consists of collections $\{\psi_v\}_{v\in Q_0}$ such that
        $T_a\psi_{t(a)}\circ f_a=f_a\circ S_a\psi_{s(a)}$ for all $a\in Q_1$.

        For every $a\in Q_1$, let $H_a=\Hom_{\catC_a}(S_aC_{s(a)},T_aC_{t(a)})$.
        We make $H_a$ into a $(\End_{\catC_{t(a)}}(C_{t(a)}),\End_{\catC_{s(a)}}(C_{s(a)})$-bimodule
        by setting $f\cdot h\cdot g=T_af\circ h \circ S_ag$
        for all $f\in \End(C_{t(a)})$, $g\in \End(C_{s(a)})$.
        For all
        $I\in\calI_{s(a)}$, $J\in\calI_{t(a)}$,  let
        \[
        K_{I,J}=J(C_{t(a)})\cdot H_a+H_a\cdot I(C_{s(a)})
        \]
        and define
        \begin{align*}
        W_a&=\SMatII{\End(C_{t(a)})}{H_a}{0}{\End(C_{s(a)})}\qquad\text{and} \\
        \calB_a&=\left\{\SMatII{J(C_{t(a)})}{K_{I,J}}{0}{I(C_{s(a)})}\,\Big|\,
        I\in\calI_{s(a)}, J\in\calI_{t(a)}\right\}\ .
        \end{align*}
        Then $W_a$ is a ring and $\calB_a$ is a filter base of ideals of  $W_a$. Thus, $\calB_a$ induces a linear ring
        topology on $W_a$, and that topology is Hausdorff since $\catR$ is Hausdorff ($\bigcap_{I,J}K_{I,J}=0$
        by \eqref{REP:EQ:Hausdorff-cond}).

        Let $R=\prod_{v\in Q_0}\End_{\catC}(C_v)$. Then $R$ is a Hausdorff LT ring since $\catR$ is Hausdorff.
        Define
        $\vphi_a:R\to W_a$ by
        \[
        \vphi_a(\{\psi_v\})=\SMatII{\psi_{t(a)}}{0}{0}{\psi_{s(a)}}\ .
        \]
        and let
        \[
        \alpha_a=\smallSMatII{0}{f_a}{0}{0}\in W_a\ .
        \]
        It is clear that $\vphi_a$ is a continuous homomorphism
        of LT rings.
        Furthermore, by definition, we have $\End(\rho)=\bigcap_{a\in Q_1}\vphi_a^{-1}(\Cent_{W_a}(\alpha_a))$. Since
        $\Cent_{W_a}(\alpha_a)$ is a semi-centralizer subring of $W_a$, $\vphi_a^{-1}(\Cent_{W_a}(\alpha_a))$
        is a semi-centralizer subring of $R$ (Proposition~\ref{REP:PR:inverse-image-of-inv-subring}),
        so
        by Proposition~\ref{REP:PR:intersection-of-centralizer-subrings}, $\End(\rho)$ is a semi-centralizer subring of $R$.
        The ring $R$
        is semiperfect and has FP by assumption. Thus, by Theorem~\ref{REP:TH:semi-cent-subring}, $\End(\rho)$ is semiperfect
        and has FP.
    \end{proof}

    \begin{remark}\label{REP:RM:remarks-on-main-thm}
        (i) If we drop the assumption that $Q_0$ is finite and that the endomorphism rings of objects in $\catC_v$ ($v\in Q_0$)
        are semiperfect in Theorem~\ref{REP:TH:main}, then the endomorphism rings of objects  in $\Rep(Q,\catR)$  still have FP.

        (ii) When the assumptions of Theorem~\ref{REP:TH:main}
        are satisfied, the indecomposable representations in  $\Rep(Q,\catR)$ are precisely those
        with local endomorphism ring. (Indeed, an object in an additive category is indecomposable precisely
        when its endomorphism ring has no nontrivial idempotents, and a semiperfect ring with nontrivial
        idempotents is local.)
    \end{remark}

    \begin{example}
        Theorem~\ref{REP:TH:main} fails if we do not assume  condition~\eqref{REP:EQ:Hausdorff-cond}:
        Let $R$ and $S$ be two semiperfect LT rings satisfying
        FP, and assume that there is a \emph{non-topological} ring $M$ containing $R$
        and $S$ such that $R\cap S$ is not semiperfect. This implies that $R\cap S$ does not have FP
        with respect to any topology, for otherwise, $R\cap S$ would be semiperfect by \cite[Lm.~5.9(ii)]{Fi12C}.

        View $M$ as an $(R,S)$ bimodule and define $Q$ and $\catR$ as in Example~\ref{REP:EX:bilinear-pairings}
        with the difference that only projective $R$- and $S$-modules are allowed. Namely,
        $Q$ is the quiver $u\xrightarrow{a}v$, $\catC_u=\lproj{R}$, $\catC_v=(\rproj{S})^\op$, $\catC_a=\lMod{R}$, $S_a=\id$ and
        $T_a=\Hom_S(-,M)$. Then $\catC_u$ and $\catC_v$ are Hausdorff with respect to their standard linear
        topology.
        Let $\rho=(\{C_u,C_v\},\{f_a\})\in\Rep(Q,\catR)$ be the object corresponding to the $(R,S)$-bilinear
        pairing $\omega:R\times S\to M$ given by $\omega(r,s)=rs$. That is, $C_u={}_RR$, $C_v=S_S$ and $f_a:{}_RR\to\Hom_S(S_S,M_S)\cong M$
        is just the inclusion map. In this case, $H_a=\Hom_R({}_RR,{}_RM)\cong M$ as an $(R,S)$-bimodule
        and under that isomorphism, $f_a$ is mapped to $1$. Following the proof of Theorem~\ref{REP:TH:main},
        we now see that $\End(\rho)$ is isomorphic to
        \[
        \left\{\smallSMatII{r}{0}{0}{s}\in \smallSMatII{R}{M}{0}{S}\where  \smallSMatII{r}{0}{0}{s}\smallSMatII{0}{1}{0}{0}=
        \smallSMatII{0}{1}{0}{0}\smallSMatII{r}{0}{0}{s}\right\}\cong R\cap S
        \]
        which is not semiperfect nor has FP.
        An explicit example of $R$, $S$ and $M$ as above is $R=\Z_p$, $S=\Z_q$ and $M=\Q_p\otimes_{\Q} \Q_q$.
        Here, $p,q$ are distinct prime numbers and $\Z_p$ and $\Z_q$ are identified with $\Z_p\otimes 1$ and $1\otimes \Z_q$.
        Since $a\otimes 1=1\otimes b$ implies $a=b\in\Q$, we see that $R\cap S=\{nm^{-1}\where n\in \Z,\,m\in\Z\setminus(p\Z\cup q\Z)\}$,
        a  non-semiperfect ring.
    \end{example}

    The rest of this section concerns applications
    of Theorem~\ref{REP:TH:main}. Recall that for an LT ring $R$,
    $\FP{R}$ denotes the category of finitely presented right $R$-modules
    and $\HFP{R}$ denotes the category of Hausdorff finitely presented right $R$-modules.

    \begin{example}\label{REP:EX:when-main-applies}
        Let $\catR$ be the trivial realization of $Q$ over an additive category $\catC$ (i.e.\ $\catC_x=\catC$
        and $S_a=T_a=\id_{\catC}$ for all $x\in Q_0\cup Q_1$, $a\in Q_1$).
        The assumptions of Theorem~\ref{REP:TH:main} are satisfied for $\catR$ when $\catC$ is a Hausdorff LT category
        whose endomorphisms are semiperfect and has FP. These conditions are satisfied by the following categories:

        (i) The category $\HFP{R}$, where $R$ is a semiperfect
        Hausdorff LT ring satisfying Fitting's Property. Indeed, by \cite[Th.\ 8.3(ii)]{Fi12C},
        the endomorphism ring of every object in $\HFP{R}$ is semiperfect and has FP.
        Note that $\HFP{R}$ contains all f.g.\ projective right  $R$-modules.

        (ii) The category $\FP{R}$ where $R$ is a strictly pro-right-artinian ring
        or a rank-$1$ Henselian valuation ring. This follows from (i) and
        Example~\ref{REP:EX:LT-categories-II}.

        (iii) The category $\Rep(Q',\catR')$ where $\catR'$ is a categorical realization of $Q'$
        satisfying the assumptions of
        Theorem~\ref{REP:TH:main}.
    \end{example}

    \begin{example}
        Let $R$ be a commutative LT ring with FP  and let $M=\{M_a\}_{a\in Q_1}$ be a family of
        modules in $\HFP{R}$. Then the Krull-Schmidt Theorem holds for the category of $M$-twisted representations
        of $Q$ on $\HFP{R}$ in the sense of \cite[\S2]{GothKing05}. Indeed, apply Theorem~\ref{REP:TH:main}
        with the categorical realization of Example~\ref{REP:EX:twisted-quivers}. The assumptions
        of Theorem~\ref{REP:TH:main} apply since the functors
        $\{T_a,S_a\}$ are continuous (Example~\ref{REP:EX:cont-functor})
        and the categories $\{\catC_x\}_{x\in Q_0\cup Q_1}$ are Hasudorff LT (Example~\ref{REP:EX:when-main-applies}(i)).
    \end{example}

    \begin{example}
        Let $R$ be a semiperfect LT ring satisfying FP. Then every endomorphism $f$ in $\HFP{R}$
        decomposes to a direct sum of indecomposable endomorphism $f=f_1\oplus \dots\oplus f_t$
        and this decomposition is essentially unique, i.e.\ every other such decomposition is conjugate to
        the original one. (This decomposition can be regarded as a ``Jordan decomposition'' of $f$.)
        To see why this holds, recall from Example~\ref{REP:EX:examples-of-quivers}
        that the category of endomorphisms in $\HFP{R}$
        can be understood as $\Rep(Q,\HFP{R})$ for suitable $Q$. Now apply Theorem~\ref{REP:TH:main}
        together with the Krull-Schmidt Theorem; the assumptions of Theorem~\ref{REP:TH:main} hold
        by Example~\ref{REP:EX:when-main-applies}(i).
    \end{example}

    \begin{example}\label{REP:EX:semiprimary-endo-ring}
        Let $\catR$ be a non-linearly-topologized semi-abelian categorical realization of $Q$.
        Assume that $\End_{\catC_v}(C)$ is semiprimary for all $v\in Q_0$ and $C\in\catC_v$.
        Then the endomorphism rings of all objects in $\Rep(Q,\catR)$ are semiprimary.
        Indeed, give the categories $\{\catC_x\}_{x\in Q_0\cup Q_1}$ the discrete topology
        of Example~\ref{REP:EX:LT-categories-I}. By Example~\ref{REP:EX:cont-realizations},
        $\catR$ is Hausdorff, and by Example~\ref{REP:EX:pi-reg-ring}(ii),
        the endomorphism rings of objects in $\catC_v$ are semiprefect and have FP.
        Thus, we can apply Theorem~\ref{REP:TH:main} to $\catR$.
        In the proof of the theorem, it was shown that the endomorphism ring
        of $\rho=(\{C_v\},\{f_a\})\in\Rep(Q,\catR)$ is a semi-centralizer subring
        of $\prod_{v}\End_{\catC_v}(C_v)$, which is semiprimary in our case.
        Therefore, by the comment after Theorem~\ref{REP:TH:semi-cent-subring}, $\End(\rho)$
        is semiprimary. (The same argument works if we replace semiprimary with right perfect.)

        Examples of categories $\catC_v$ with semiprimary endomorphisms rings include
        abelian categories consisting of objects of finite length (Example~\ref{REP:EX:abelian-cats})
        and the category $\FP{R}$
        when $R$ is a semiprimary ring (this is due to Bjork \cite[Th.\ 4.1]{Bj71B};
        see \cite[Ths.\ 7.3 \& 8.3]{Fi12C} for generalizations).
        When the the categories $\{\catC_x\}_{x\in Q_0\cup Q_1}$
        are  abelian, consist of objects of finite length, and the functors $\{S_a\}$ and $\{T_a\}$ are right-exact
        and left-exact, respectively, the category $\Rep(Q,\catR)$ is abelian and consists
        of objects of finite length. Therefore, can use Example~\ref{REP:EX:abelian-cats} rather
        than Theorem~\ref{REP:TH:main} to prove that endomorphism rings in $\Rep(Q,\catR)$
        are semiprimary.
\rem{

        In the special case that $\catR$ is the trivial realization of $Q$ on an \emph{abelian}
        category $\catC$ in which all objects have finite length, there
        is no need to use Theorem~\ref{REP:TH:main} to get
        the previous result. The reason is that in this case, the category $\Rep(Q,\catR)$
        is abelian and its object have finite length, and the Hadara-Sai Lemma
        (e.g.\ see \cite[Prp.\ 2.9.29]{Ro88}\footnote{
            The proof is phrased for modules
            but works in arbitrary abelian categories
        }) implies that in  such a category, the endomorphism ring of every indecomposable
        object is semiprimary. However, in general, the category $\Rep(Q,\catR)$
        is not abelian even when all the categories $\{\catC_x\}_{x\in Q_0\cup Q_1}$
        are abelian and all the functors $\{S_a,T_a\}_{a\in Q_1}$ are exact.
}
    \end{example}

    \begin{cor}
        Let $R,S$ be semiperfect LT rings satisfying FP and let $M$ be an $(R,S)$-bimodule
        such that $\bigcap_{I,J}(IM+MJ)=0$
        where $I$ and $J$ range over $\calI_R$ and $\calI_S$, respectively.
        Then every $(R,S)$-bilinear pairing $\omega:A\times B\to R$
        with $A\in\lHFP{R}$ and $B\in\HFP{S}$ can be written as an orthogonal sum of
        indecomposable bilinear parings $\omega=\omega_1\perp\dots\perp \omega_t$
        and this decomposition is essentially unique. Namely, if $\omega=\omega'_1\perp\dots\perp \omega'_{t}$
        with $\omega'_1,\dots,\omega'_t$ indecomposable,
        then $t=t'$, and after suitable reordering, $\omega_i$ is isometric to $\omega_i$ for all $1\leq i\leq t$.\footnote{
            Two $(R,S)$-bilinear pairings $\omega_i:A_i\times B_i\to M$ ($i=1,2$) are isometric
            if there are isomorphisms $f:A_1\to A_2$ and $g:B_1\to B_2$ such that $\omega_2(fa,gb)=\omega_1(a,b)$
            for all $a\in A_1$, $b\in B_1$.
        }
    \end{cor}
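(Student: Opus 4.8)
The plan is to identify the category of $(R,S)$-bilinear pairings with arguments in $\lHFP{R}$ and $\HFP{S}$ with a category of the form $\Rep(Q,\catR)$ and then invoke Theorem~\ref{REP:TH:main}. Take $Q$ to be the quiver $u\xrightarrow{a}v$ and, as in Example~\ref{REP:EX:bilinear-pairings}, define $\catR$ by $\catC_u=\lHFP{R}$, $\catC_v=(\HFP{S})^{\op}$, $\catC_a=\lMod{R}$, $S_a=\id$, $T_a=\Hom_S(-,M)$, where $\catC_u$ and $\catC_v$ carry the standard linear topology $\calI_R$ (on the left) and the opposite of the standard linear topology $\calI_S$, respectively. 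Exactly as in Example~\ref{REP:EX:bilinear-pairings}, an object of $\Rep(Q,\catR)$ is the datum of an $(R,S)$-bilinear pairing $\omega\colon A\times B\to M$ with $A\in\lHFP{R}$, $B\in\HFP{S}$; unwinding the commuting square defining a morphism shows that the isomorphisms in $\Rep(Q,\catR)$ are precisely the isometries of pairings, that direct sums correspond to orthogonal sums, and that indecomposable objects correspond to pairings admitting no nontrivial orthogonal decomposition. Hence it suffices to check that $\catR$ satisfies the hypotheses of Theorem~\ref{REP:TH:main}: that theorem then makes $\Rep(Q,\catR)$ a Krull--Schmidt category, which translates verbatim into the asserted existence and essential uniqueness of $\omega=\omega_1\perp\dots\perp\omega_t$.

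Three of the four hypotheses are routine. The category $\catC_u=\lHFP{R}$ is pseudo-abelian by the left-module analogue of Example~\ref{REP:EX:LT-categories-II}, and $\catC_v=(\HFP{S})^{\op}$ is pseudo-abelian because the opposite of a pseudo-abelian category is (split the idempotent in $\HFP{S}$ and interchange the two structure maps). That $\catR$ is linearly topologized is immediate. For the endomorphism rings: since $R$ and $S$ are semiperfect Hausdorff LT rings with FP, \cite[Th.\ 8.3(ii)]{Fi12C} gives that $\End_{\catC_u}(A)$ and $\End_{\HFP{S}}(B)$ are semiperfect and have FP for all $A\in\lHFP{R}$, $B\in\HFP{S}$; since $\End_{\catC_v}(B)=\End_{\HFP{S}}(B)^{\op}$, it remains to observe that semiperfectness, Fitting's Property and the Hausdorff LT structure all pass to the opposite ring. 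For FP this is clear from the definition: $\nMat{R^{\op}}{n}\cong\nMat{R}{n}^{\op}$ via transposition, and the conditions $a=eae+(1-e)a(1-e)$, invertibility of $eae$ in $eRe$, and $(1-e)a^m(1-e)\to 0$ are all insensitive to reversing multiplication; semiperfectness and Hausdorffness are symmetric for the usual reasons.

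The substantive point is that $\catR$ is Hausdorff. The categories $\catC_u$ and $\catC_v$ are Hausdorff LT (the condition $\bigcap_I I(X,Y)=0$ is symmetric in $X,Y$, hence passes to the opposite), so one must verify condition~\eqref{REP:EQ:Hausdorff-cond} for the arrow $a$, and this is where the hypothesis $\bigcap_{I,J}(IM+MJ)=0$ enters. Fix $A\in\catC_u$, $B\in\catC_v$ and identify $\Hom_{\catC_a}(S_aA,T_aB)=\Hom_R(A,\Hom_S(B,M))$ with the group $\Bil(A,B;M)$ of $(R,S)$-bilinear pairings $A\times B\to M$. Under this identification, for $I\in\calI_R$ the subgroup $S_aI(A)$ consists of the endomorphisms of $A$ with image in $IA$, and precomposing a pairing with such an endomorphism yields a pairing valued in $IM$ (expand $h(a)=\sum_k i_k a_k$ and use $R$-linearity in the first variable); dually, for $I'\in\calI_v$ corresponding to $J\in\calI_S$ the subgroup $T_aI'(B)$ is the image under $\Hom_S(-,M)$ of the endomorphisms of $B$ with image in $BJ$, and composing a pairing $\omega$ with one of these replaces it by $(a,b)\mapsto\omega(a,\eta b)$ with $\eta(b)\in BJ$, hence by a pairing valued in $MJ$. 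Therefore every element of the bracketed sum in \eqref{REP:EQ:Hausdorff-cond} is a pairing valued in $IM+MJ$, and any pairing lying in the intersection over all $I,J$ takes all its values in $\bigcap_{I,J}(IM+MJ)=0$ and so vanishes. This proves \eqref{REP:EQ:Hausdorff-cond}, and hence Theorem~\ref{REP:TH:main} applies.

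I expect the translation of \eqref{REP:EQ:Hausdorff-cond} in the previous paragraph to be the one delicate step: one must correctly unravel the ideals of $\lHFP{R}$ and of the \emph{opposite} category $(\HFP{S})^{\op}$, the action of $S_a=\id$ and $T_a=\Hom_S(-,M)$ on morphisms, and the correspondence between composition in $\catC_a$ and the ``precompose in the first (resp.\ second) variable'' operations on bilinear pairings. Everything else is bookkeeping: the passage between $\Rep(Q,\catR)$ and bilinear pairings is exactly as in Example~\ref{REP:EX:bilinear-pairings}, and the Krull--Schmidt conclusion is immediate from Theorem~\ref{REP:TH:main}.
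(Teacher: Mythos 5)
Your proposal is correct and follows essentially the same route as the paper: the same realization $\catC_u=\lHFP{R}$, $\catC_v=(\HFP{S})^{\op}$, $T_a=\Hom_S(-,M)$, the same verification that precomposition by $I(A)$ lands in $IM$-valued pairings and postcomposition by $T_aJ(B)$ in $MJ$-valued ones so that $\bigcap_{I,J}(IM+MJ)=0$ yields condition~\eqref{REP:EQ:Hausdorff-cond}, and the same appeal to Example~\ref{REP:EX:when-main-applies}(i) and Theorem~\ref{REP:TH:main}. The correspondence between direct sums and orthogonal sums, which the paper also flags and leaves to the reader, is treated at the same level of detail, and your extra remarks on opposite categories and rings are harmless additions.
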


    \begin{proof}
        Define $\catR$ and $Q$ as in Example~\ref{REP:EX:bilinear-pairings},
        but take $\catC_u$ and $\catC_v$ to be $\lHFP{R}$ and $\HFP{S}$ instead of
        $\lMod{R}$ and $\rMod{S}$.
        Then the objects of $\Rep(Q,\catR)$ correspond to $(R,S)$-bilinear pairings
        $\omega:A\times B\to M$ with $A\in\HFP{R}$ and $B\in\HFP{S}$.

        Recall that $Q$ is the quiver $u\xrightarrow{a} v$.
        The category $\catC_u=\HFP{R}$ is Hausdorff LT (when given the standard topology)
        by definition. Likewise, the category $\catC_v=(\HFP{S})^\op$ inherits the standard
        (Hausdorff) linear structure of $\HFP{S}$.
        We claim that $\catR$ is LT Hausdorff, namely, that \eqref{REP:EQ:Hausdorff-cond} holds.
        Let $I\in\calI_R$ and $J\in\calI_S$.
        Then for all $A\in\lMod{R}$, $B\in\rMod{S}$, we have
        \begin{align*}
        \Hom_{\catC_a}(S_aA,T_aB)\circ S_aI(A) &=  \Hom_R(A,\Hom_S(B,M))\circ I(A)  \\
        & \subseteq  I(A,\Hom_S(B,M))=\Hom_R(A,I\cdot\Hom_S(B,M)) \\
        & \subseteq \Hom_R(A,\Hom_S(B,IM))
        \end{align*}
        and
        \begin{align*}
        T_aJ(B)\circ \Hom_{\catC_a}(S_aA,T_aB) & = \Hom_S(-,M)(J(B,B))\circ\Hom_R(A,\Hom_S(B,M)) \\
        & \subseteq \Hom_R(A,\Hom_S(B,MJ)) \ .
        \end{align*}
        The last inclusion holds since for all $f\in J(B,B)$, $g\in\Hom_R(A,\Hom_S(B,M))$ and $a\in A$,
        we have $(\Hom_S(f,M)\circ g)a=(ga)\circ f\in J(B,M)=\Hom_S(B,MJ)$.
        By assumption, $\bigcap_{I,J}(IM+MJ)=0$, so the previous two equations imply \eqref{REP:EQ:Hausdorff-cond}.

        Next, observe that By Example~\ref{REP:EX:when-main-applies}(i), each of objects in $\catC_u$ and $\catC_v$
        has a semiperfect endomorphism ring satisfying FP. Thus, by Theorem~\ref{REP:TH:main},
        the objects of $\Rep(Q,\catR)$ have a Krull-Schmidt decomposition.
        We are therefore finished if we show that direct sum decompositions in $\Rep(Q,\catR)$
        correspond to orthogonal sum decompositions of $(R,S)$-bilinear pairings. (This is not obvious since
        not all morphisms in $\Rep(Q,\catR)$ correspond to morphisms of $(R,S)$-bilinear pairings; see Example~\ref{REP:EX:bilinear-pairings}.)
        This is routine and is left to the reader.
        \rem{
        Indeed, if $\omega=\omega_1\oplus \omega_2$ with $\omega_i:A_i\times B_i\to M$,
        then }
    \end{proof}

\section{Quivers with Relations and Categories}
\label{section:Fitting}

    In this section we consider quivers with relations and thus restrict our
    discussion to \emph{trivial} realizations of quivers on a fixed additive category $\catC$.

\medskip

    For a quiver $Q$, denote by $P(Q)$ the set of paths in $Q$.
    For $(\{C_v\},\{f_a\})\in \Rep(Q,\catC)$ and a path $p=(v_1,a_1,v_2,a_2,\dots,v_n)$, we set $f_p=f_{a_{n-1}}\circ\dots\circ f_{a_{1}}$
    if $n>1$
    and $f_{p}=\id_{C_{v_1}}$ if $n=1$.
    A \emph{path related quiver} is a quiver $Q$
    equipped with a family of pairs $\frakR=\{p_i,q_i\}_{i\in I}$ such that $p_i$ and $q_i$ are
    paths in $Q$ admitting a common source and target.
    A representation of $(Q,\frakR)$ on $\catC$, is a representation $(\{C_v\},\{f_a\})\in \Rep(Q,\catC)$
    such that $f_{p_i}=f_{q_i}$ for all $i\in I$.
    The class of representations of $(Q,\frakR)$ on $\catC$ forms a full subcategory of $\Rep(Q,\catC)$
    and thus, Theorem~\ref{REP:TH:main} remains true if we replace $\Rep(Q,\catC)$ with that category.

    Every category $\catU$ with finitely many
    objects can be considered as a path related quiver. Indeed, take $Q_0$ to be the objects of $\catU$,
    let $Q_1$ be the morphisms of $\catU$, and take $\frakR$ to be  the relations induced from the composition rule.
    In this case, a representation of $(Q,\frakR)$ on $\catC$ is merely a functor from $\catU$ to $\catC$.
    Theorem~\ref{REP:TH:main} now reads as:

    \begin{thm}\label{REP:TH:main-for-cat}
        Let $\catU$ be a category with finitely many objects and let $\catC$  be a Hausdorff LT pseudo-abelian category
        such that $\End_{\catC}(C)$ is  semiperfect and has FP for all $C\in\catC$.
        Then the category of functors  $F:\catU\to \catC$,
        denoted $\Hom(\catU,\catC)$, is a Hausdorff LT pseudo-abelian category and $\End(F)$ is
        semiperfect and has FP for all $F\in \Hom(\catU,\catC)$.\footnote{
            Recall that an endomorphism of
            a functor $F$ is just a \emph{natural transformation} from $F$ to itself. Namely,
            it is a collection $\{t_U\}_{U\in\catU}$ with $t_U\in\End_{\catC}(FU)$
            such that for every
            $U,V\in \catU$ and $f\in\Hom_{\catU}(U,V)$, one has $t_V\circ Ff=Ff\circ t_U$.
            When $\catU$ is a small category (i.e.\ the objects of $\catU$ form a set), $\End(F)$ is a set.
        }
        In particular, $\Hom(\catU,\catC)$ is a Krull-Schmidt category.
    \end{thm}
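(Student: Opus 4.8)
The plan is to recognize $\Hom(\catU,\catC)$ as a full subcategory of $\Rep(Q,\catC)$ for a suitable path related quiver, and then transport the conclusions of Theorem~\ref{REP:TH:main} across. First I would let $Q$ be the quiver whose vertex set $Q_0$ is the (finite) set of objects of $\catU$ and whose edge set $Q_1$ is the set of all morphisms of $\catU$, with source and target inherited from $\catU$; and I would let $\frakR$ consist of all pairs $(g\circ f,\ gf)$ expressing the composition law of $\catU$ (the left entry a length-two path, the right entry the single edge given by the composite), together with pairs forcing $f_{\id_U}=\id_{C_U}$ at each object $U$. As explained in the discussion preceding the theorem, a representation of $(Q,\frakR)$ on $\catC$ is exactly a functor $\catU\to\catC$, and a morphism of such representations is exactly a natural transformation. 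In particular, for a functor $F$ with associated representation $\rho_F$, the endomorphism ring $\End(F)$ computed in $\Hom(\catU,\catC)$ coincides with $\End_{\Rep(Q,\catC)}(\rho_F)$, since both consist of families $\{t_U\}$ satisfying $t_V\circ Ff=Ff\circ t_U$ for all morphisms $f$ of $\catU$.

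Second, I would check that the full subcategory $\Hom(\catU,\catC)\subseteq\Rep(Q,\catC)$ is closed under finite direct sums and under passage to summands. Closure under direct sums is immediate, since in a representation $f_p$ is additive in $p$'s constituent morphisms, so relations are preserved. For summands I would invoke the explicit construction in the proof of Proposition~\ref{REP:PR:reps-are-pseudo-abelian}: for the trivial realization the summand of $\rho=(\{C_v\},\{f_a\})$ cut out by an idempotent $\{e_v\}$ has structure maps $f'_a=p_{t(a)}\circ f_a\circ i_{s(a)}$, whence $f'_p=p_{t(p)}\circ f_p\circ i_{s(p)}$ for every path $p$ by telescoping; thus $f_{p_i}=f_{q_i}$ forces $f'_{p_i}=f'_{q_i}$, so the summand representation is again a functor. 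It follows that $\Hom(\catU,\catC)$ is additive, and that it is pseudo-abelian, being a full summand-closed subcategory of the pseudo-abelian category $\Rep(Q,\catC)$. The Hausdorff linear topology on $\Rep(Q,\catC)$ restricts to one on $\Hom(\catU,\catC)$, since restricting the defining ideals to a full additive subcategory preserves the filter-base and Hausdorff properties.

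Third, I would apply Theorem~\ref{REP:TH:main} to the trivial realization of $Q$ on $\catC$; its hypotheses hold by assumption on $\catC$ together with Example~\ref{REP:EX:cont-realizations}(i), and the possible infinitude of $Q_1$ is harmless since only $Q_0$ is required to be finite. The theorem gives that every object of $\Rep(Q,\catC)$ has semiperfect endomorphism ring with FP; restricting to objects lying in $\Hom(\catU,\catC)$ and using the identification of endomorphism rings from the first step, $\End(F)$ is semiperfect and has FP for every $F\in\Hom(\catU,\catC)$. The final clause — that $\Hom(\catU,\catC)$ is a Krull-Schmidt category — then follows by applying the Krull-Schmidt Theorem recalled in Section~\ref{section:Krull-Schmidt} objectwise.

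The only step requiring genuine care, and the one I would write out in most detail, is the summand-stability of $\Hom(\catU,\catC)$ inside $\Rep(Q,\catC)$: it is precisely what allows pseudo-abelianness and the endomorphism-ring computation to pass from the ambient category to the subcategory. Everything else is bookkeeping once the dictionary between functors and relation-respecting representations has been fixed, so I do not anticipate a serious obstacle beyond being careful with the relation set $\frakR$ (in particular remembering the identity relations, which are needed because every identity morphism of $\catU$ is literally an edge of $Q$).
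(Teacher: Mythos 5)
Your proof is correct and follows essentially the same route as the paper: encode $\catU$ as a path related quiver $(Q,\frakR)$, identify functors $\catU\to\catC$ with the relation-respecting representations forming a full subcategory of $\Rep(Q,\catC)$, and invoke Theorem~\ref{REP:TH:main}. You are in fact more careful than the text on two points it leaves implicit — the identity relations $f_{\id_U}=\id_{C_U}$ and the closure of the subcategory under summands (needed for pseudo-abelianness) — but these are exactly the details the paper intends the reader to supply.
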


    In case $\catU$ is an infinite small category, the same argument  shows that
    $\End(F)$ has FP for every functor $F:\catU\to \catC$.
    While this statement is meaningless in case $\catU$ is a large category
    (since $\End(F)$ is not necessarily a set),
    we can still get a statement resembling Fitting's Lemma:

    \begin{thm}\label{REP:TH:Fitting}
        Let $\catU$ be any category and let $\catC$ be an additive Hausdorff LT pseudo-abelian category
        such that $\End_{\catC}(C)$  has FP for all $C\in \catC$.
        Let $F:\catU\to \catC$ be a functor and let $t:F\to F$ be a natural transformation.
        Then there are functors $F_0,F_1:\catU\to \catC$ and natural transformations
        $t_0:F_0\to F_0$, $t_1:F_1\to F_1$ such that $F=F_0\oplus F_1$ (i.e.\ $FU=F_0U\oplus F_1U$ for all
        $U\in \catU$), $t=t_0\oplus t_1$ (i.e.\ $t_U=(t_0)_U\oplus (t_1)_U$ for all
        $U\in \catU$),
        $t_1$ is a natural isomorphism, and $((t_0)_U)^n\to 0$ as $n\to\infty$ in $\End_\catC(F_0U)$
        for all $U\in\catU$.
        The decomposition $F=F_0\oplus F_1$ is unique up to natural isomorphism.\footnote{
            If $\catC$ is not small, the proof requires a stronger version of the axiom of choice, namely, that there exists a choice
            function for every \emph{class} of nonempty classes. However, this can be avoided if there is a \emph{canonical}
            way to choose the summand corresponding to an idempotent morphism in $\catC$, which is indeed the case
            for many standard categories (e.g.\ module categories).
        }
    \end{thm}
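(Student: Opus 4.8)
The plan is to reduce everything to the uniqueness of the associated idempotent (\cite[Lm.\ 5.6]{Fi12C}) together with Theorem~\ref{REP:TH:semi-cent-subring}. For each object $U\in\catU$ the ring $\End_{\catC}(FU)$ is a Hausdorff LT ring with FP, so $t_U$ has an associated idempotent $e_U\in\End_{\catC}(FU)$: one has $t_U=e_Ut_Ue_U+(1-e_U)t_U(1-e_U)$, the element $e_Ut_Ue_U$ is invertible in $e_U\End_{\catC}(FU)e_U$, and $(1-e_U)t_U^n(1-e_U)\to 0$. The first relation forces $e_Ut_U=e_Ut_Ue_U=t_Ue_U$, so $e_U$ commutes with $t_U$. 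The heart of the proof is the claim that $e:=\{e_U\}_{U\in\catU}$ is a natural transformation $F\to F$, i.e.\ that $e_V\circ Ff=Ff\circ e_U$ for every morphism $f:U\to V$ of $\catU$; granting this, $e$ and $1-e$ are orthogonal idempotent endomorphisms of $F$, and the theorem follows by splitting them objectwise.

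To prove naturality, fix $f:U\to V$ and work inside $E:=\End_{\catC}(FU\oplus FV)$, a Hausdorff LT ring with FP (being an endomorphism ring in $\catC$), written in the usual $2\times 2$ block form with rows and columns indexed by $FU,FV$. Put $T=\smallSMatII{t_U}{0}{0}{t_V}$ and $G=\smallSMatII{0}{0}{Ff}{0}$. Naturality of $t$ says precisely $t_VFf=Fft_U$, which is equivalent to $TG=GT$. Let $e\in E$ be the associated idempotent of $T$. First, a block computation shows that $\smallSMatII{e_U}{0}{0}{e_V}$ satisfies the three defining conditions of the associated idempotent of $T$: the first two are immediate from the corresponding properties of $e_U$ and $e_V$, and the third uses that the block-diagonal embedding $\End_{\catC}(FU)\times\End_{\catC}(FV)\hookrightarrow E$ is a topological embedding, since every ideal $I\in\calI_{\catC}$ satisfies $I(FU\oplus FV)=\smallSMatII{I(FU)}{I(FV,FU)}{I(FU,FV)}{I(FV)}$ by the ideal axioms applied to the coproduct injections and projections. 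By uniqueness of the associated idempotent, $e=\smallSMatII{e_U}{0}{0}{e_V}$. Second, $\Cent_E(G)$ is a semi-centralizer subring of $E$ and it contains $T$, so by Theorem~\ref{REP:TH:semi-cent-subring} it has FP; let $e'$ be the associated idempotent of $T$ computed in $\Cent_E(G)$. A two-sided inverse of $e'Te'$ inside $e'\Cent_E(G)e'$ is in particular one inside $e'Ee'$, so $e'$ also satisfies the three defining conditions in $E$, whence $e'=e$ by uniqueness. But $e'\in\Cent_E(G)$, so $e$ commutes with $G$, and expanding $\smallSMatII{e_U}{0}{0}{e_V}\smallSMatII{0}{0}{Ff}{0}=\smallSMatII{0}{0}{Ff}{0}\smallSMatII{e_U}{0}{0}{e_V}$ gives $e_V\circ Ff=Ff\circ e_U$.

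With naturality in hand, use pseudo-abelianity of $\catC$ to split each $e_U$: choose $F_1U,F_0U\in\catC$ with morphisms $i_U,p_U$ (resp.\ $j_U,q_U$) realizing $F_1U$ (resp.\ $F_0U$) as the image of $e_U$ (resp.\ $1-e_U$), so $i_Up_U=e_U$, $p_Ui_U=\id_{F_1U}$, $j_Uq_U=1-e_U$, $q_Uj_U=\id_{F_0U}$; when $\catC$ is not small this choice invokes the strengthened choice principle of the footnote (and is canonical for the standard categories mentioned there). Set $F_1f=p_V\circ Ff\circ i_U$ and $F_0f=q_V\circ Ff\circ j_U$; the relation $e_VFf=Ffe_U$ (equivalently $(1-e_V)Ff=Ff(1-e_U)$) makes $F_1,F_0$ functors, makes $\iota=\{i_U\}$, $\pi=\{p_U\}$ (and $\{j_U\},\{q_U\}$) natural, and exhibits $F=F_1\oplus F_0$. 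Since $e_U$ commutes with $t_U$, the families $t_1:=\{p_Ut_Ui_U\}$ and $t_0:=\{q_Ut_Uj_U\}$ are natural transformations with $t=t_1\oplus t_0$: the off-diagonal blocks vanish because, e.g., $j_U(q_Ut_Ui_U)p_U=(1-e_U)t_Ue_U=0$ and $j_U,p_U$ are split mono/epi. Finally, under the topological ring isomorphism $\End_{\catC}(F_1U)\cong e_U\End_{\catC}(FU)e_U$, $\phi\mapsto i_U\phi p_U$, the map $(t_1)_U$ corresponds to $e_Ut_Ue_U$, which is invertible, so $t_1$ is a natural isomorphism; likewise $((t_0)_U)^n$ corresponds to $(1-e_U)t_U^n(1-e_U)\to 0$, giving the topological nilpotency.

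For uniqueness, suppose $F=F_0'\oplus F_1'$ and $t=t_0'\oplus t_1'$ with $t_1'$ a natural isomorphism and $((t_0')_U)^n\to 0$; let $e_U'\in\End_{\catC}(FU)$ be the idempotent projecting onto $F_1'U$ along $F_0'U$. These hypotheses say exactly that $t_U=e_U't_Ue_U'+(1-e_U')t_U(1-e_U')$, that $e_U't_Ue_U'$ is invertible in $e_U'\End_{\catC}(FU)e_U'$, and (via the isomorphism above) that $(1-e_U')t_U^n(1-e_U')\to 0$, so $e_U'$ is the associated idempotent of $t_U$ and hence $e_U'=e_U$ by uniqueness. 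Two splittings of one and the same idempotent are canonically isomorphic, and $e_VFf=Ffe_U$ promotes these to natural isomorphisms $F_1\cong F_1'$, $F_0\cong F_0'$ that intertwine $t_1$ with $t_1'$ and $t_0$ with $t_0'$. I expect the genuinely delicate step to be the naturality claim of the second paragraph — correctly deploying Theorem~\ref{REP:TH:semi-cent-subring} against $\Cent_E(G)$ and then collapsing the resulting idempotent onto the associated idempotent of $T$ by uniqueness; the remaining points (the formula for ideals under coproducts, functoriality of $F_0,F_1$, the two ring isomorphisms) are routine verifications.
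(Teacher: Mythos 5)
Your proof is correct, but it reaches the key point --- naturality of the family of associated idempotents $\{e_U\}$ --- by a different route than the paper. The paper restricts $F$ and $t$ to an arbitrary finite full subcategory $\catU'\subseteq\catU$, invokes Theorem~\ref{REP:TH:main-for-cat} together with Remark~\ref{REP:RM:remarks-on-main-thm}(i) (which removes the semiperfectness hypothesis) to conclude that $\End(F|_{\catU'})$ has FP, takes the associated idempotent of $t|_{\catU'}$ there --- which is a natural transformation on $\catU'$ by construction --- and then glues these over all finite full subcategories using uniqueness of the associated idempotent. You instead produce the idempotents objectwise and verify naturality one morphism at a time, by comparing the associated idempotent of $\smallSMatII{t_U}{0}{0}{t_V}$ computed in $E=\End_{\catC}(FU\oplus FV)$ with the one computed in the semi-centralizer subring $\Cent_E(G)$. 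Both arguments rest on the same two pillars (Theorem~\ref{REP:TH:semi-cent-subring} and uniqueness of associated idempotents); yours is essentially a one-morphism-at-a-time unwinding of what Theorem~\ref{REP:TH:main-for-cat} delivers all at once, which makes the proof self-contained (no appeal to the quiver machinery) at the cost of the explicit block computations. The supporting verifications you flag --- that $I(FU\oplus FV)$ is the full matrix of ideals (so block-diagonal convergence is componentwise), that the three defining conditions transfer from $\Cent_E(G)$ to $E$, and the topological identification $\End_{\catC}(F_0U)\cong(\id_{FU}-e_U)\End_{\catC}(FU)(\id_{FU}-e_U)$ used both for the convergence claim and for the uniqueness argument --- are all correct as stated.
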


    \begin{proof}
        Let $\catU'$ be a finite full subcategory of $\catU$, and let $F'=F|_{\catU'}$ and $t'=t|_{\catU'}$.
        By Theorem~\ref{REP:TH:main-for-cat} and Remark~\ref{REP:RM:remarks-on-main-thm}(i),
        $\End(F')$ is a has FP (when endowed with the topology induced from
        the product topology on $\prod_{U\in\catU}\End_{\catC}(F'U)$).
        Therefore, $t'$ admits an associated idempotent $e'\in\End(F')$ (see section  \ref{section:pi-regular}),
        namely,
        $t'=e't'e'+(1-e')t'(1-e')$, $e't'e'$ is invertible in $e'\End(F')e'$ and
        $(1-e')t'^n(1-e')\to 0$ when $n\to\infty$.

        Let $\catU''$ be another finite full subcategory of $\catU$. Define $F''$ and $e''$ analogously to
        $F'$ and $e'$.
        If $\catU''$ contains $\catU'$, then $e''|_{\catU'}$ is also an associated idempotent for $t'$,
        so the the uniqueness of the associated idempotent implies $e''|_{\catU'}=e'$.
        Using this, we may define a natural transformation $e:F\to F$ by letting $e_U$ to be $e'_U$
        where $e'$ is obtained from some finite subcategory $\catU'$ as above that contains $U$.
        It is clear that $e$ is idempotent, $t=ete+(1-e)t(1-e)$, and for all $U\in\catU$, $e_Ut_Ue_U$
        is invertible in $e_U\End_{\catC}(FU)e_U$, and $(\id_{FU}-e_U)t_U^n(\id_{FU}-e_U)\to 0$ when $n\to\infty$
        in $\End_{\catC}(FU)$.
        Furthermore, $e$ is  unique.

        Now, define $F_0,F_1:\catU\to \catC$ by taking $F_1U$ and $F_0U$ to be summands of $FU$
        corresponding to the idempotents $e_U$ and $\id_{FU}-e_U$, respectively (here we assume the  axiom of choice
        for \emph{classes}). (More precisely, for all $U\in\catU$, \emph{choose}  summands
        $(S_U,i_U,p_U)$, $(S'_U,i'_U,p'_U)$ corresponding
        to $e_U$, $\id_{FU}-e_U$, respectively. For all $U,V\in\catU$ and
        $f\in\Hom_{\catU}(U,V)$, let $F_1U=S_U$, $F_0U=S'_U$ and
        $F_1f=p_V\circ Ff\circ i_U$, $F_0f=p'_V\circ Ff\circ i'_U$. Now,
        identify $F_1U\oplus F_0U=S_U\oplus S'_U$ with $FU$ via $i_U\oplus i'_U$.) Since $t_U$ commutes with $e_U$,
        it is a direct sum of two morphisms
        $(t_0)_U\in\End_{\catC}(F_0U)$ and $(t_1)_U\in\End_{\catC}(F_1U)$.
        It is easy to see that $F_0,F_1,t_0,t_1$ satisfy all requirements.
        That the decomposition $F=F_0\oplus F_1$ is unique up to natural isomorphism follows
        from the uniqueness of $e$.
    \end{proof}

    \begin{example}
        Let $\catC$ be the category of finite abelian groups, which we make into a Hausdorff LT
        category by endowing it with the discrete topology ($\calI_{\catC}=\{0\}$).
        Since the endomorphism ring of any object in $\catC$ is finite, it has FP.
        Therefore, by Theorem~\ref{REP:TH:Fitting}, for every functor $F:\catC\to \catC$ and a natural
        transformation $t:F\to F$, we have an essentially unique decomposition $F=F_0\oplus F_1$, $t=t_0\oplus t_1$
        where $t_1:F_1\to F_1$ is a natural isomorphism and $t_0:F_0\to F_0$ is a natural transformation which
        is nilpotent on every object.
        For example, in case $F$ is the identity functor and $t$ is multiplication by $2$, the functors $F_0$ and
        $F_1$ are given by $F_0A=\{a\in A\where \text{$\ord(a)$ is a power of $2$}\}$
        and $F_1A=\{a\in A\where \textrm{$\ord(a)$ is odd}\}$ (where $\ord(a)$ denotes the order of $a$ in $A$).
    \end{example}

\section{A Remark on Cancelation}
\label{section:cancelation}

    Let $\catC$ be an additive  category. An object $C\in\catC$
    is said to \emph{cancel from direct sums} if $C\oplus C'\cong C\oplus C''$ implies $C'\cong C''$
    for all $C',C''\in\catC$.
    It is clear that all objects in a Krull-Schmidt category cancel from direct sums.
    In particular, this holds for the objects of $\Rep(Q,\catR)$ (with $Q$ a quiver and $\catR$
    a realization)
    if the assumptions of Theorem~\ref{REP:TH:main} apply.
    However, it turns out that these weaker condition holds under the much milder assumption
    that all objects in the categories $\{\catC_v\}_{v\in Q_0}$ are semilocal ($\catR$ does not have
    to be linearly topologized).

    The latter follows by applying two known results: The first is a
    theorem by Camps and Dicks (\cite{CaDi93}) stating that a subring $S$ of a semilocal ring
    $R$ satisfying $S\cap\units{R}=\units{S}$ (i.e.\ elements of $S$ that are invertible in $R$
    are invertible in $S$) is semilocal. The second is the fact that modules
    with semilocal endomorphism ring cancel from direct sum.
    It is due to Evans and Bass: Evans
    (\cite[Th.\ 2]{Evans73}) showed that if $M$ is an $R$-module and $1$ is \emph{in the stable range} of $\End_R(M)$,
    then $M$ cancels from direct sums; Bass (\cite[Lm.\ 6.4]{Ba64}) showed that $1$ is in the stable
    range of any semilocal ring. The Bass-Evans Theorem actually implies that in an additive category
    $\catC$, every object $C$ whose endomorphism ring is semilocal cancels from direct sums.
    Indeed, assume $C\oplus C'\cong C\oplus C''$ and let $E=C\oplus C'\oplus C''$
    and $R=\End_{\catC}(E)$. It is well-know that the functor $\Hom_{\catC}(E,-):\catC\to \rMod{R}$
    is faithful and full once restricted to $\catC|_E$, the full subcategory of $\catC$ consisting of summands
    of $E^n$ (with $n$ arbitrary). Since $C,C',C''\in\catC|_E$, we may replace them with
    $\Hom(E,C),\Hom(E,C'),\Hom(E,C'')$ and assume $\catC=\rMod{R}$. By Bass and Evans, we have $C'\cong C''$
    in this case.

    Now let $\rho=(\{\catC_v\},\{f_a\})\in\Rep(Q,\catR)$
    and assume $\End_{\catC_v}(C_v)$ is semilocal for all $v\in Q_0$.
    Then $R:=\prod_{v}\End_{\catC_v}(C_v)$ is semilocal. It is straightforward to check that every
    element of $\End(\rho)$ that is invertible in $R$ is invertible in $\End(\rho)$.
    Thus, by Camps and Dicks, $\End(\rho)$ is semilocal. By Bass and Evans, this means $\rho$ cancels from direct
    sums, as required.

    Note that, as in section~\ref{section:main}, the result just established can be applied to obtain
    cancelation of representations of quivers on additive categories, twisted representations
    (in the sense of \cite[\S2]{GothKing05}) and bilinear pairings. One
    should use the fact
    that the endomorphism ring of a finitely generated (resp.\ finitely presented)
    module over a commutative (resp.\ arbitrary) semilocal ring is semilocal
    by \cite[Pr.\ 3.1]{FacHer06} (resp.\ \cite[Pr.\ 3.2]{FacHer06}).
    The details are left to the reader.

\bibliographystyle{plain}
\bibliography{MyBib}

\end{document}